\title{A bibLaTeX example}
\newtheorem{theorem}{Theorem}[section]
\newtheorem{lemma}[theorem]{Lemma}
\newtheorem{claim}[theorem]{Claim}
\newtheorem{subclaim}[theorem]{Subclaim}
\newtheorem{definition}[theorem]{Definition}
\newtheorem{conjecture}[theorem]{Conjecture}
\newtheorem{remark}[theorem]{Remark}
\newcommand\T{\mathcal{T}}
\newcommand\R{\mathbb{R}}
\newcommand\U{\mathcal{U}}
\renewcommand\P{\mathbb{P}}
\title{Derived Models in PFA}
\author{Derek Levinson and Nam Trang}
\date{\today}
\begin{document}
\maketitle
\thispagestyle{firstpage}

\begin{abstract}
    We discuss a conjecture of Wilson that under the Proper Forcing Axiom, $\Theta_0$ of the derived model at $\kappa$ is below $\kappa^+$. We prove the conjecture holds for the old derived model. Assuming mouse capturing in the new derived model, the conjecture holds there as well. We also show $\Theta < \kappa^+$ in the case of the old derived model, and under additional hypotheses for the new derived model.
\end{abstract}

\section{Introduction}

In this paper, we prove several results which represent progress on a conjecture of Trevor Wilson relating large cardinals, derived models, and the Proper Forcing Axiom (PFA).

Derived models were invented as a source of models of the axiom of determinacy (AD). Suppose $\kappa$ is a limit of Woodin cardinals and $G$ is $Col(\omega,<\kappa)$-generic. A relatively simple model of $AD$ is $L(\R^*_G)$, where $\R^*_G$ represents the reals in the symmetric collapse of $V$ induced by $G$. There may, however, be larger models of $AD$ in $V[G]$. The ``old derived model'' at $\kappa$ includes $\R^*_G$ as well as a large collection of set of reals $Hom^*_G$ not contained in $L(\R^*_G)$ and, like $L(\R^*_G)$, can be shown to satisfy $AD^+$. The ``new derived model'' includes even more set of reals and is roughly the largest model of $AD^+$ contained in $V(\R^*_G)$.

Derived models have been extensively studied in the case that $V$ is a mouse (see \cite{dmatm}). Much less is known about the derived model if $V$ lacks the fine-structural properties of mice. In the opposite direction, we are interested in the derived model in the case where $V$ is very ``wide,'' for example when forcing axioms hold in $V$. Recent progress was made here by Wilson:

\begin{theorem}[Wilson]
\label{wilson thm}
    (PFA) If $\kappa$ is a limit of Woodin cardinals of countable cofinality, then $\Theta_0$ of the derived model at $\kappa$ is below $\kappa^+$.
\end{theorem}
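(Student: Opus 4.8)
The trivial half of the statement is immediate: since $\mathrm{cf}(\kappa)=\omega$ and $\kappa$ is a limit of inaccessibles, a count of nice names shows $|\R^*_G|=\kappa$ in $V[G]$, so every surjection of $\R^*_G$ onto an ordinal — \emph{a fortiori} every $OD$ surjection computing $\Theta_0$ of the derived model — has length $<\kappa^+$, whence $\Theta_0\le\kappa^+$. The whole content is the strict inequality. The plan is to reduce $\Theta_0$ to a question about the sets of reals of a single small $AD^+$ model, to capture each of those sets by a bounded piece of the symmetric collapse (hence by a small object in $V$), and finally to rule out that these pieces climb cofinally into $\kappa^+$ by invoking a reflection consequence of PFA in $V$.

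First I would pass to $M_0:=L(\mathcal P(\R^*_G)\cap OD^D)$, where $D$ is the derived model; $M_0$ satisfies $AD^+$ with $\Theta^{M_0}=\Theta_0$ and $\mathcal P(\R^*_G)^{M_0}=\mathcal P(\R^*_G)\cap OD^D$, so it suffices to show every prewellordering of $\R^*_G$ in $M_0$ — equivalently, every $OD^D$ prewellordering of $\R^*_G$ — has length below a single $\delta<\kappa^+$. Next, using $\mathrm{cf}(\kappa)=\omega$, fix an increasing cofinal sequence $\langle\kappa_n:n<\omega\rangle$ of Woodin cardinals with $G_n:=G\restriction\kappa_n$, and use Woodin's derived model theorem (together with the abundance of Woodins in $V$) to see that each such prewellordering is $\infty$-Borel in $M_0$ and is in fact coded by homogeneity-system data already present in $V[G_n]$ for some $n$ — correspondingly, in $V$, by a $<\!\kappa$-homogeneously Suslin set together with a homogeneity system living in $V_\kappa$. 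From such data one reads off, inside $D$ (which satisfies $\mathsf{ZF}+\mathsf{DC}$), a Suslin representation of the prewellordering, and the Kunen--Martin theorem then bounds its length by an ordinal $\delta_n<\kappa^+$ depending only on $n$. Thus the prewellorderings of $M_0$ are organized into an increasing $\omega$-chain of ranks $\langle\delta_n:n<\omega\rangle$, and $\Theta_0\le\sup_n\delta_n$.

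It remains to prove $\delta:=\sup_n\delta_n<\kappa^+$ — i.e.\ that the homogeneity systems (or the associated trees / iteration maps) needed to capture \emph{all} $OD^D$ sets of reals do not reach cofinally into $\kappa^+$. This is the heart of the argument, and it is here — and only here — that PFA is used, necessarily in $V$, since PFA fails in $V[G]$. Assume toward a contradiction that $\delta=\kappa^+$; unwinding the capture above produces in $V$ a coherent family, indexed cofinally in $\kappa^+$, of homogeneity systems of strictly increasing rank, and from such a family one extracts a combinatorial object on $\kappa^+$ that PFA forbids — most naturally a $\square_\kappa$-sequence (or an analogous non-trivial coherent sequence of length $\kappa^+$), contradicting Todorcevic's theorem that PFA refutes $\square_\mu$ for every uncountable $\mu$; the Mapping Reflection Principle or stationary reflection at $\kappa^+$ can be substituted. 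Hence $\delta<\kappa^+$ and $\Theta_0\le\delta<\kappa^+$.

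I expect the genuine difficulty to lie entirely in this last step: converting a hypothetical cofinal-in-$\kappa^+$ family of $OD^D$ surjections into a bona fide failure of a PFA consequence in $V$. This demands careful bookkeeping to turn the Wadge/Suslin data of $D$ into \emph{coherent} structure on $\kappa^+$ in the ground model, and it is exactly for this coherence — and for the $\omega$-indexed decomposition $\R^*_G=\bigcup_n\R^{V[G_n]}$ underlying it — that the hypothesis $\mathrm{cf}(\kappa)=\omega$ is essential.
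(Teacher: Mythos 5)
There is a genuine gap, and it sits exactly where your argument needs to do real work. Your middle step claims that every $OD$ prewellordering of $\R^*_G$ in the derived model is ``coded by homogeneity-system data already present in $V[G_n]$ for some $n$,'' hence Suslin in the derived model, hence of length $<\delta_n<\kappa^+$ by Kunen--Martin. What is actually captured by trees from bounded stages $V[G\upharpoonright\gamma]$ is only $Hom^*_G$, i.e.\ the Suslin-co-Suslin sets of the derived model (Remark \ref{same suslin-co-suslin sets}); $OD$ prewellorderings are in general \emph{not} Suslin there, and $\Theta_0$ is not the supremum of lengths of Suslin prewellorderings. For instance, if the derived model has the form $L(\R^*_G)$, then $\Theta_0=\Theta$, while Suslin prewellorderings have length below the successor of the largest Suslin cardinal, which is strictly less than $\Theta$. (Being $\infty$-Borel, which you also invoke, does not yield a Suslin representation inside the model.) Moreover, since $|\R^*_G|=\kappa$ in $V[G]$, Kunen--Martin applied to trees of unrestricted size gives nothing beyond the trivial bound $<\kappa^+$; to get a useful bound you would again need membership in $Hom^*$, not mere ordinal definability. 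So the inequality $\Theta_0\le\sup_n\delta_n$ does not follow from what you wrote.

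The final step is also only asserted: ``from such a family one extracts a $\square_\kappa$-sequence (or an analogous coherent sequence)'' is precisely the content of the theorem, and no mechanism is given for producing coherence on $\kappa^+$ from Wadge/Suslin data. Wilson's actual argument, as recorded in this paper, is different: assuming $\Theta_0=\kappa^+$ he constructs a coherent \emph{covering matrix} for $\kappa^+$ in $V$ --- an $\omega\times\kappa^+$-indexed object whose very definition uses $cof(\kappa)=\omega$ --- and then applies Viale's theorem that PFA rules out such matrices. The square-sequence route you gesture at is the one this paper develops for $\kappa$ of arbitrary cofinality, but there the coherent sequence is produced fine-structurally (from initial segments of $L(H^V_\kappa,CODE)$, or of $Lp^\Sigma$ over $\pi\cup H^V_\kappa$ via an $S$-construction, following the Schimmerling--Zeman construction), and in the new derived model it needs mouse capturing or a hod-pair hypothesis; none of that machinery, nor any substitute for it, appears in your sketch.
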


In other words, the theorem says functions from $\R^*_G$ into $\kappa^+$ which are ordinal definable in the derived model are bounded in $\kappa^+$. On the other hand, $V[G] \models |\R^*_G| = \kappa$. So the theorem implies the ordinal definable part of the derived model is not close to being all of $V[G]$.

In a sequel to this paper we will show the hypotheses of Theorem \ref{wilson thm} imply $\Theta$ of the derived model is below $\kappa^+$. For now, we are interested in a different generalization of this theorem. Wilson conjectured the assumption of countable cofinality in Theorem \ref{wilson thm} is unnecessary:

\begin{conjecture}[Wilson]
\label{first part of conjecture}
    (PFA) If $\kappa$ is a limit of Woodin cardinals, then $\Theta_0$ of the derived model at $\kappa$ is below $\kappa^+$.
\end{conjecture}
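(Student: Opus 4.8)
The plan is to establish the conjecture for the old derived model $D = L(\R^*_G, Hom^*_G)$, extending the argument behind Theorem~\ref{wilson thm} to arbitrary cofinality. The feature to exploit is that the old derived model is \emph{captured in $V$}: every $A \in Hom^*_G$ is $p[T_A] \cap \R^*_G$ for some ${<}\kappa$-homogeneous tree $T_A \in V$, and since $D = L(\R^*_G, Hom^*_G)$, the $L$-construction over $\R^*_G$ together with these trees can be absorbed into a single $\infty$-Borel code in $V$ for \emph{every} $A \in \mathcal{P}(\R^*_G) \cap D$. This is exactly where the old derived model is more tractable than the new one: there a set of reals of the derived model need not admit a code in $V$, and one must instead postulate mouse capturing, the capturing mice then playing the role of the $T_A$. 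Since $D \models AD^+$, it suffices to bound below $\kappa^+$ the lengths of the $OD$-in-$D$ prewellorderings of $\R^*_G$; I will show each such prewellordering is captured by a code in $V$ of ordinal height ${<}\kappa^+$, and that boundedly many codes suffice, whence $\Theta_0$ of $D$ is below $\kappa^+$.

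The first step is to make the passage to $V$ uniform. The derived model $D$ and its $OD$ structure are independent of the choice of $Col(\omega,{<}\kappa)$-generic $G$, so the map sending an $OD$-in-$D$ prewellordering to a canonical (least in a fixed well-ordering) capturing code can be arranged to be definable in $V$ from $\kappa$ and the ordinal parameter defining the prewellordering. One then uses the $AD^+$ analysis of the $OD$ sets of $D$ as a Wadge-initial segment to see that, for the purpose of computing $\Theta_0^D$, it is enough to consider prewellorderings that are $OD$ in $D$ from parameters below $\Theta_0^D$, so that the relevant codes are enumerated by countably many formulas over a set of ordinal parameters which the argument will show to be bounded below $\kappa^+$.

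The heart of the proof, and where PFA is indispensable, is the bound on the heights of the capturing codes. In a fine-structural $V$ (say a mouse with a proper class of Woodins) the codes can be as tall as $V$, so $\Theta_0^D$ lies far above $\kappa^+$; PFA rules this out by forcing $V$ to be ``wide.'' I would use PFA-consequences --- failure of $\square_\kappa$ and of weak square, stationary reflection at $\kappa^+$, and direct proper-forcing-plus-absoluteness arguments --- to show that any $OD$-in-$V$ code (from the parameters isolated above) which captures a set of reals of $D$, i.e.\ which projects to $\R^*_G$, has ordinal height strictly below $\kappa^+$. Carrying this out in full generality is where Wilson's proof must be reworked: in Theorem~\ref{wilson thm} one fixes a cofinal $\omega$-sequence through $\kappa$ and runs the reflection and diagonalization along it, whereas for $\mathrm{cf}(\kappa) > \omega$ no such sequence exists. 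The replacement I would try is to reflect through a continuous cofinal family of ordinals $\bar\kappa < \kappa$ which are themselves limits of Woodins of cofinality $\omega$ --- such $\bar\kappa$ exist precisely because the Woodins below $\kappa$ have uncountable order type --- apply Theorem~\ref{wilson thm} at each $\bar\kappa$ to get that $\Theta_0$ of $D_{\bar\kappa}$ is below $\bar\kappa^+ < \kappa$, and transfer the bound upward using the coherence of the homogeneous trees across the stages $\bar\kappa$.

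The main obstacle I anticipate is this transfer. It is not automatic that an $OD$-in-$D$ prewellordering of $\R^*_G$ is captured at a bounded stage $\bar\kappa < \kappa$: a priori its homogeneous tree could genuinely live on $\omega \times \kappa$ rather than on $\omega \times \lambda$ for some $\lambda < \kappa$, and then no $\bar\kappa < \kappa$ sees it. Showing that PFA forces every such tree to be small enough --- equivalently that $(\delta^2_1)^D$, and hence $\Theta_0^D$, cannot reach $\kappa^+$ --- is the crux, and I expect it to combine the reflection through the $\bar\kappa$'s with a careful analysis, as in Wilson's argument, of which proper forcings can see initial segments of $\R^*_G$ as their reals. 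Should the transfer resist, the fallback is to redo Wilson's reflection directly, replacing $\omega$-sequences by directed systems indexed by $\mathrm{cf}(\kappa)$, at the cost of more bookkeeping but no genuinely new idea.
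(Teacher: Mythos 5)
This statement is a conjecture which the paper itself does not prove in full generality: the paper establishes it for the old derived model (Theorem \ref{intro thm for old} via Theorem \ref{result in old derived model}) and for the new derived model only under mouse capturing (Theorem \ref{intro thm for new}). Your proposal, which restricts attention to the old derived model, is therefore aiming at the right special case, but it contains a genuine gap at exactly the point you flag as the ``crux.'' The reflection-and-transfer scheme does not work as stated: the derived model at $\kappa$ is not approximated by the derived models at the stages $\bar\kappa < \kappa$, and the bounds you would get from Theorem \ref{wilson thm} at those stages are of the wrong order of magnitude. Indeed $\Theta_0^{D(V,\bar\kappa)} < \bar\kappa^+ < \kappa$, while $\Theta_0^{D(V,\kappa)} > \kappa$, so every $OD$ prewellordering whose length you need to control lives only at stage $\kappa$; ``coherence of the homogeneous trees across the stages'' gives no handle on these, because (as you yourself note) their trees genuinely live on $\omega\times\kappa$ and are never seen at a bounded stage. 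Likewise, the step ``use PFA consequences (failure of $\square_\kappa$, stationary reflection, proper-forcing absoluteness) to show every capturing code has height $<\kappa^+$'' is not an argument but a restatement of the conjecture: nothing in the proposal produces the combinatorial object whose existence PFA forbids.

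By contrast, the paper's proof of the old-derived-model case proceeds by contraposition through square sequences rather than by repairing Wilson's covering-matrix/reflection argument (which the paper explicitly abandons for uncountable cofinality). One first codes a $Col(\omega,{<}\kappa)$-name for $(Hom^*,\R^*)$ by a set $CODE\subset H^V_\kappa$ (Claim \ref{size of hom}), so that $olD(V,\kappa)\subseteq M[G]$ for $M = L(H^V_\kappa,CODE)$; if $\Theta^{olD(V,\kappa)} = \kappa^+$ then $\Theta^M = \kappa^+$, and the fine structure of $L(H^V_\kappa\cup CODE)$ (following \cite{coherent} and the Schimmerling--Zeman construction of \cite{zssquare}) yields an almost coherent sequence, then a $\square'_\kappa$-sequence on a club, and finally a $\square_\kappa$-sequence in $V$, contradicting $PFA \implies \neg\square_\kappa$. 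Your opening observation --- that the old derived model is captured by objects coded in $H^V_\kappa$, whereas the new derived model requires mouse capturing to supply such codes --- is correct and is exactly the paper's point of departure; but to complete the argument you need a concrete mechanism converting ``$\Theta = \kappa^+$'' into a PFA-violating combinatorial principle, and the fine-structural square construction inside $L(H^V_\kappa\cup CODE)$ is that mechanism, which your sketch does not supply.
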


We make substantial progress on Conjecture \ref{first part of conjecture}. First, we prove Conjecture \ref{first part of conjecture} for the old derived model. In fact, we show something stronger:

\begin{theorem}
\label{intro thm for old}
    $(PFA)$ If $\kappa$ is a limit of Woodin cardinals, then $\Theta$ of the old derived model at $\kappa$ is below $\kappa^+$.
\end{theorem}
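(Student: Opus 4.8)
\section*{Proof proposal for Theorem~\ref{intro thm for old}}

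The plan is to reduce the statement to a cardinality count. Fix $G\subseteq Col(\omega,<\kappa)$ generic and let $D=L(\R^*_G,Hom^*_G)$ be the old derived model at $\kappa$. Since $Col(\omega,<\kappa)$ is $\kappa$-c.c.\ it preserves $\kappa$ and $\kappa^+$, and $|\R^*_G|^{V[G]}=\kappa$. Any prewellordering of $\R^*_G$ lying in $D$ is, in $V[G]$, a surjective image of a set of size $\kappa$, hence has length $<\kappa^+$; so $\Theta^D\le\kappa^+$. Conversely, if $D$ had at most $\kappa$ sets of reals it would have at most $\kappa$ Wadge degrees, and since every $\gamma<\Theta^D$ is realized as a Wadge rank in $D$ this would force $\Theta^D<\kappa^+$. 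So it suffices to prove that $D$ has at most $\kappa$ sets of reals, i.e.\ $|P(\R^*_G)^D|\le\kappa$.

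The ``Suslin part'' of $D$ contributes only $\le\kappa$ such sets, for a soft reason. Every $A\in Hom^*_G$ is obtained, in $V(\R^*_G)$, from a tree $T$ lying in some initial model $V[G\restriction\alpha]$, $\alpha<\kappa$, that is $<\kappa$-complemented there; so $A\mapsto(\alpha,T)$ embeds $Hom^*_G$ into $\bigcup_{\alpha<\kappa}V_\kappa^{V[G\restriction\alpha]}$, a set of $V[G]$-cardinality $\le\kappa$ (each $V_\kappa^{V[G\restriction\alpha]}$ has cardinality $\le\kappa$, as $\kappa$ stays inaccessible over $V[G\restriction\alpha]$). Hence $|Hom^*_G|^{V[G]}\le\kappa$. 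By the Derived Model Theorem the members of $Hom^*_G$ are exactly the Suslin-co-Suslin sets of $D$, so they form an initial segment of the Wadge hierarchy of $D$, of some length $\lambda^*<\kappa^+$; the family of sets of reals of $D$ of Wadge rank $<\lambda^*$ is then, in $D$, a surjective image of $\R^*_G\times\lambda^*$, hence has $V[G]$-cardinality $\le\kappa$. In particular $D=L(X)$ for a set $X$ of $V[G]$-cardinality $\le\kappa$.

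The heart of the matter is to show that the remaining sets of reals of $D$ --- those of Wadge rank $\ge\lambda^*$, produced by the constructibility hierarchy over $X$ --- are again only $\le\kappa$ in number; equivalently, $\Theta^D<\kappa^+$. Counting definitions is not enough, since $L(X)$ could a priori have $2^\kappa$ sets of reals, so one needs a genuinely derived-model-theoretic input; this is exactly the point at which the ``old'' derived model is easier than the ``new'' one. I would obtain the bound by analyzing $\mathrm{HOD}^D$ below $\Theta^D$: because the Suslin-co-Suslin sets of $D$ carry tree (and homogeneity) representations living in $V_\kappa$ --- indeed in the $V[G\restriction\alpha]$'s --- the standard $\mathrm{HOD}$-analysis realizes $(\mathrm{HOD}\,|\,\Theta)^D$ as, or embeds it into, a direct limit of iterable structures all of whose relevant critical points lie below $\kappa$, hence an object of $V[G]$-cardinality $\le\kappa$. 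Since $\Theta^D$ is the ordinal height of $(\mathrm{HOD}\,|\,\Theta)^D$, this gives $|\Theta^D|^{V[G]}\le\kappa$, i.e.\ $\Theta^D<\kappa^+$. (An alternative route would be to show directly that every set of reals of $D$ is captured by a term relation over $Col(\omega,<\delta)$, for some Woodin $\delta<\kappa$, with the term relation lying in $V[G\restriction\delta]$, again yielding an injection into a set of $V[G]$-size $\kappa$.)

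I expect this last step to be the main obstacle: verifying that the $\mathrm{HOD}$-analysis (or term-relation capturing) of $D$ can really be carried out with objects of $V[G]$-rank below $\kappa$, and that the reflection it requires at $\kappa^+$ is available. This is where PFA enters, in the spirit of Wilson's proof of Theorem~\ref{wilson thm}: PFA rules out the square-type configurations in $V$ that would otherwise allow the relevant direct limit --- equivalently $\Theta^D$ --- to climb all the way to $\kappa^+$. For the new derived model the analogous analysis is not known to go through on PFA alone, which is exactly why ``mouse capturing in the new derived model'' has to be assumed there, as recorded in the abstract.
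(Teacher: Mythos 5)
Your first two paragraphs are fine as far as they go (one small slip: a tree witnessing $A\in Hom^*_G$ need not lie in $V_\kappa^{V[G\upharpoonright\alpha]}$, since such trees typically have height well above $\kappa$; the bound $|Hom^*_G|^{V[G]}\le\kappa$ instead comes from the fact that $\kappa$ remains a strong limit in each $V[G\upharpoonright\alpha]$, so $Hom_{<\kappa}^{V[G\upharpoonright\alpha]}\subseteq P(\R)^{V[G\upharpoonright\alpha]}$ has size $<\kappa$ and each of its members has a canonical extension $A^*$ independent of the witnessing tree --- this is exactly Claim \ref{size of hom}). The genuine gap is your third paragraph, which is the entire content of the theorem and is not an argument. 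The HOD analysis you invoke is not available for the old derived model from the stated hypotheses: realizing $(\mathrm{HOD}|\Theta)^D$ as a direct limit of iterable structures is precisely the kind of capturing that the paper must \emph{assume} (MC, or hod pairs with branch condensation, or lbr hod pairs) to treat the new derived model, and avoiding it is the whole point of the old-derived-model result. Moreover, even where the HOD analysis is known, your cardinality count fails: the direct limit system is indexed not by reals alone but by suitable premice together with tuples of OD sets of reals (or strategy fragments), so its $V[G]$-cardinality is not bounded by $\kappa$; its height is $\Theta^D$ by design, so bounding it is circular. A decisive sanity check: your mechanism, if it worked, would use PFA nowhere (your appeal to PFA is only a gesture at ``ruling out square-type configurations''), i.e.\ it would prove $\Theta^{olD(V,\kappa)}<\kappa^+$ in ZFC alone; but the paper's remark following Conjecture \ref{trevor conjecture} (the Prikry-generic premouse $N$ over a model $M$ of $AD$, where $L(\R^*_H)=M$ sits inside the derived model and $\Theta$ of the derived model equals $(\delta_\infty^+)^N$) shows this is false. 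The same example rules out your ``term-relation capturing'' alternative, which would essentially say every set of reals of $D$ is in $Hom^*$, i.e.\ $D\models AD_\R$.

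What the paper actually does is quite different and is the bridge your outline is missing: it proves (Theorem \ref{result in old derived model}) that $\neg\square_\kappa$ alone implies $\Theta^{olD(V,\kappa)}<\kappa^+$, and then quotes $PFA\implies\neg\square_\kappa$. Assuming $\Theta^{olD(V,\kappa)}=\kappa^+$, one codes a $Col(\omega,<\kappa)$-name for $(Hom^*,\R^*)$ by a set $CODE\subseteq H^V_\kappa$ (this is where your counting of $Hom^*$ is genuinely used), so that $olD(V,\kappa)\subseteq L(H^V_\kappa,CODE)[G]$ and hence cofinally many $\tau<\kappa^+$ are collapsed definably over levels of $L(H^V_\kappa\cup CODE)$; the fine-structural construction of \cite{coherent} applied to this hierarchy yields an almost coherent sequence, the hull argument of \cite{zssquare} (Lemma \ref{square' from old derived model}) turns it into a $\square'_\kappa$-sequence on a club, and Remarks \ref{square' on club implies square'} and \ref{square' implies square} upgrade this to $\square_\kappa$ in $V$, contradicting PFA. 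So PFA enters concretely through the failure of $\square_\kappa$, and the fine structure used is that of the $L$-hierarchy over $H^V_\kappa\cup CODE$, not of $\mathrm{HOD}^D$; if you want to salvage your approach, it is this passage from ``$\Theta^{olD(V,\kappa)}=\kappa^+$'' to a $\square_\kappa$-sequence in $V$ that must be supplied.
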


We also prove the conjecture for the new derived model assuming mouse capturing:

\begin{theorem}
\label{intro thm for new}
    $(PFA)$ If $\kappa$ is a limit of Woodin cardinals and the new derived model satisfies mouse capturing, then $\Theta_0$ of the new derived model at $\kappa$ is below $\kappa^+$.
\end{theorem}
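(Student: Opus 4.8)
The plan is to argue by contradiction. Observe first that $\Theta_0$ of the new derived model $D=D(V,\kappa)$ is \emph{always} at most $\kappa^+$: any surjection of $\R^*_G$ onto an ordinal $\alpha$ which is ordinal definable in $D$ lies in $D\subseteq V[G]$, and since $V[G]\models|\R^*_G|=\kappa$ while $\mathrm{Col}(\omega,{<}\kappa)$ has the $\kappa^+$-c.c.\ and hence preserves $\kappa^+$, any such $\alpha$ is below $\kappa^+$. So the entire content of the theorem is to rule out $\Theta_0^D=\kappa^+$; equivalently, to show that $\Theta_0^D$, being $\le\kappa^+$, is not the regular cardinal $\kappa^+$. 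Assume toward a contradiction that $\Theta_0^D=\kappa^+$. In the countable-cofinality case of Theorem~\ref{wilson thm} the contradiction comes from the fact that the derived model decomposes along a cofinal $\omega$-sequence of Woodin cardinals, which forces $\Theta_0^D$ to have cofinality $\omega$ and so to lie strictly below the regular cardinal $\kappa^+$. The point of the present theorem is to obtain the same conclusion when no such sequence is available, and this is exactly where PFA and mouse capturing are needed.

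The first step is the fine-structural analysis of $D$ below $\Theta_0$. Mouse capturing in $D$ says precisely that the sets of reals of Wadge rank ${<}\Theta_0^D$ are those captured by (least-branch/hod) premice over $\R^*_G$, so the $\mathrm{HOD}$-analysis in the style of Steel--Woodin goes through inside $D$: the ``$\mathrm{HOD}$ of $D$ below $\Theta_0^D$'', call it $\M_\infty$, is an iterable premouse, definable in $D$, obtained as the direct limit of the countably iterable such premice under the iteration ordering, with $o(\M_\infty)=\Theta_0^D=\kappa^+$ and with $\Theta_0^D$ a Woodin cardinal (indeed a limit of Woodins) in $\M_\infty$ --- in particular $\M_\infty$ has cofinally many cardinals below $\kappa^+$ and does not regard $\kappa^+$ as the successor of $\kappa$. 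This is the step for which mouse capturing is assumed: for the \emph{new} derived model the sets of reals below $\Theta_0$ need not be ${<}\kappa$-homogeneously Suslin, so --- unlike in Theorem~\ref{intro thm for old}, where the $\mathrm{Hom}^*_G$-machinery supplies this analysis for free and pushes it past $\Theta_0$ --- one must postulate the capturing hypothesis in order to run the computation of $\M_\infty$.

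The second step is to transfer the directed system underlying $\M_\infty$ back to $V$ and to bound its length. Using the derived-model representation together with the iterability of the term relations and homogeneity systems coming from the Woodin cardinals of $V$ below $\kappa$ --- that is, by a core-model-induction argument --- each member of the system is, locally, captured by an iterable premouse constructed in $V$ over a hull of size ${<}\kappa$. The role of PFA is now to supply a reflection principle at $\kappa^+$ --- plausibly the failure of $\square_\kappa$ (Todorcevic) together with reflection of the approachability/stationarity structure of $\kappa^+$ --- which lets one re-read the computation of $\M_\infty$ as a continuous increasing union, of order type $\mathrm{cf}^V(\kappa)$, of such $V$-side premice, each of ordinal height ${<}\kappa^+$. (In the countable-cofinality case this cofinal system of length $\omega$ is simply the sequence of Woodins, and nothing beyond ZFC is needed.) Since $\mathrm{cf}^V(\kappa)<\kappa<\kappa^+$ and $\kappa^+$ is regular, the union has ordinal height ${<}\kappa^+$, contradicting $o(\M_\infty)=\Theta_0^D=\kappa^+$ and finishing the proof.

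I expect the two genuinely difficult points to be exactly these. First, setting up the $\mathrm{HOD}$/comparison analysis of the \emph{new} derived model below $\Theta_0$ under mouse capturing, and in particular building the correct bridge from premice over $\R^*_G$ in $D$ to mice constructed in $V$: the naive bridge via ${<}\kappa$-homogeneous Suslinness that works for the old derived model is unavailable here, which is the reason the mouse-capturing hypothesis is imposed in the first place. Second, isolating and verifying the precise consequence of PFA that carries out the reflection at $\kappa^+$ taking the place of the cofinal $\omega$-sequence --- this is the one new ingredient over Theorem~\ref{wilson thm} that makes the uncountable-cofinality case go through. By contrast, the unconditional bound $\Theta_0^D\le\kappa^+$ and the reduction to ``$\Theta_0^D\neq\kappa^+$'' are routine, as is the final cofinality computation once the short union has been produced.
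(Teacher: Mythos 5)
There is a genuine gap, and it sits exactly where you predicted the difficulty would be: your second step is not an argument that can be carried out. You propose to use PFA as a ``reflection principle at $\kappa^+$'' that lets you re-read the fine-structural model (your $\M_\infty$) as a continuous increasing union of order type $\mathrm{cf}^V(\kappa)$ of $V$-side premice of height $<\kappa^+$, and then conclude by a cofinality count. No such decomposition exists under the contradiction hypothesis --- the whole point is that the relevant fine-structural object really does have height $\kappa^+$ --- and neither $\neg\square_\kappa$ nor approachability reflection produces it; nor is any core-model-induction capturing of the pieces in $V$ part of the argument. The paper's mechanism runs in the opposite direction: assume $\Theta_0^{D(V,\kappa)}=\kappa^+$; since always $\Theta\le\kappa^+$ this forces $\Theta=\Theta_0$, so $D(V,\kappa)\models V=L(P_{\Theta_0}(\R^*))$ and mouse capturing gives $D(V,\kappa)=L(Lp(\R^*))$, whence $Lp(\R^*)$ has height $(\kappa^+)^V$ (no HOD analysis or direct-limit $\M_\infty$ is needed for the $\Theta_0$ case). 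One then pulls this back into $V$ by an $S$-construction (P-construction) over $\pi\cup H^V_\kappa$, where $\pi\subset H^V_\kappa$ is a $Col(\omega,<\kappa)$-name for $\R^*$, concluding that $Lp(\pi\cup H^V_\kappa)$, computed in $V$, has height $\kappa^+$. Finally, from this fine-structural structure of height $\kappa^+$ one constructs in $V$ an almost coherent sequence and then, by the Schimmerling--Zeman-style rearrangement, a full $\square_\kappa$-sequence; this contradicts Todorcevic's theorem that PFA implies $\neg\square_\kappa$. So $\neg\square_\kappa$ is the target of the contradiction (one \emph{builds} a square sequence from the hypothesis $\Theta_0=\kappa^+$), not a reflection tool used to shorten a union.

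Two further inaccuracies: your description of the countable-cofinality case is wrong --- Theorem \ref{wilson thm} is not a ZFC fact obtained from an $\omega$-sequence of Woodins forcing $\mathrm{cf}(\Theta_0)=\omega$; Wilson's proof uses PFA via Viale's theorem on coherent covering matrices, and without PFA one can have $\Theta$ of the derived model equal to $\kappa^+$ (see the Prikry-premouse example in the paper). Also, the correct consequence of mouse capturing used here is $V=L(Lp(\R))$ under $AD^++V=L(P_{\Theta_0}(\R))+MC$, i.e.\ a description of the \emph{whole} model as a lower-part premouse over $\R^*$, which is what makes the $S$-construction transfer to $V$ possible; the hod-pair/direct-limit machinery you invoke only enters in the later theorems about $\Theta_{\alpha+1}$.
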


Under additional assumptions, we also show $\Theta_\alpha$ of the new derived model is below $\kappa^+$ for larger $\alpha$.

\section{Background}
\subsection{The Derived Model}

\begin{definition}
    Suppose $\kappa$ is a limit of Woodin cardinals and $G$ is $Col(\omega,<\kappa)$-generic over $V$. Let $\R^*_G = \bigcup_{\gamma < \kappa} \R^{V[G\upharpoonright\gamma]}$ and $V(\R^*_G) = HOD^{V[G]}_{V\cup \R^*_G}$. Then
    \begin{enumerate}
        \item The ``old'' derived model is $olD(V,\kappa) = L(Hom^*_G,\R^*_G)$, where 
        \begin{align*}
            Hom^*_G = \{ & \rho[T] \cap \R^*_G : (\exists \gamma < \kappa) \, T\in V[G\upharpoonright\gamma] \text{ and } \\
            & V[G\upharpoonright\gamma] \models ``T \text{ is } <\kappa-\text{absolutely complemented''}\}.
        \end{align*}
        
        \item The ``new'' derived model is $D(V,\kappa) = L(\mathcal{A},\R^*_G)$, where 
        \begin{align*}
            \mathcal{A} = \{A\subseteq \R^*_G: A\in V(\R^*_G) \wedge L(A,\R^*_G) \models AD^+\}.
        \end{align*}

    \end{enumerate}
\end{definition}

Both $olD(V,\kappa)$ and $D(V,\kappa)$ satisfy $AD^+$.\footnote{See \cite{dmt} for a proof in the case of the old derived model.} $olD(V,\kappa)$ is our own notation to circumvent the unfortunate convention that the old and new derived models are typically not distinguished. Both $olD(V,\kappa)$ and $D(V,\kappa)$ technically depend on the generic $G$, but as their theories are independent of the generic, the notation makes no reference to $G$. We will also omit $G$ as a subscript to $\R^*$, $Hom^*$, and $V(\R^*)$ where it is convenient to do so.

\begin{remark}
\label{same suslin-co-suslin sets}
    The old derived model is contained in the new derived model. On the other hand, the Suslin-co-Suslin sets of the two are the same. For suppose $A\subseteq\R^*$ is Suslin-co-Suslin in $D(V,\kappa)$, so that $A = \rho[T] = \rho[S]^c$ for some $T,S\in V(\R^*)$. Then $T,S$ are $OD$ in $V[G]$ from $s$ for some finite $s\subset ON \cup \R^*$. Pick $\gamma$ large enough that $s\in V[G\upharpoonright\gamma]$. Then $T,S\in V[G\upharpoonright\gamma]$ and are $<\kappa$-absolutely complementing.
\end{remark}

\begin{definition}
    For $A\subseteq \R$, $\Theta(A) = sup\{\alpha\in On : \text{ exists surjection } f:\R\to \alpha \text{ which is } OD(A,x) \text{ for some } x\in\R\}$. We set
    \begin{enumerate}
        \item $\Theta_0 = \Theta(\emptyset)$,
        \item $\Theta_{\alpha+1} = \Theta(A)$ for any $A$ such that $w(A) = \theta_\alpha$, and
        \item $\Theta_\lambda = sup_{\alpha < \lambda} \Theta_\alpha$ for $\lambda$ a limit ordinal.
    \end{enumerate}
\end{definition}

Note $\Theta_{\alpha+1}$ is not defined if $\Theta_\alpha = \Theta$. The collection $\langle \Theta_\alpha : \alpha \leq \beta\rangle$ where $\beta$ is least so that $\Theta_\beta = \Theta$ is called the Solovay hierarchy.

\begin{remark}
\label{AD_R equivalences}
    Under $AD^+$ the following are equivalent:
    \begin{enumerate}
        \item $AD_\R$
        \item Every set of reals is Suslin-co-Suslin.
        \item $\Theta$ is a limit level of the Solovay hierarchy.
    \end{enumerate}
\end{remark}

By Remarks \ref{same suslin-co-suslin sets} and \ref{AD_R equivalences}, if $D(V,\kappa) \models AD_\R$, then $D(V,\kappa) = olD(V,\kappa)$.

\subsection{Wilson's Conjecture}

\begin{conjecture}[Wilson]
\label{trevor conjecture}
Assume $PFA + ``\kappa$ is a limit of Woodin cardinals.'' Then
\begin{enumerate}
    \item \label{1st conjecture} $\Theta_0^{D(V,\kappa)} < \kappa^+$ and
    \item \label{2nd conjecture} $\Theta_0^{D(V,\kappa)} < \Theta^{D(V,\kappa)}$.
\end{enumerate}
\end{conjecture}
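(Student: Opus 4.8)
The two clauses are linked. Any prewellordering of $\R^*$ that belongs to $D:=D(V,\kappa)$ is, read in $V[G]$, a wellordering of a subset of $\R^*$, and since $|\R^*|^{V[G]}\le\kappa$ while $\kappa^+$ (computed in $V$) is still a cardinal in $V[G]$, every such prewellordering has length $<\kappa^+$. So $\Theta_0^D\le\Theta^D\le\kappa^+$ outright; clause (\ref{2nd conjecture}) implies clause (\ref{1st conjecture}), and in fact the whole conjecture is equivalent to $\Theta_0^D<\Theta^D$, i.e.\ to $D$ not being ``$L(\R^*)$-like'' (not having every set of reals $OD$ from a real). Two tasks remain: bound $\Theta_0^D$ strictly below $\kappa^+$, and exhibit a set of reals of Wadge rank $\ge\Theta_0^D$ in $D$. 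A limit of Woodins should yield the second; PFA the first.

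For the bound, the plan is to pull the relevant prewellorderings down into the intermediate models $V[G\upharpoonright\gamma]$, $\gamma<\kappa$. For the old derived model this is Remark \ref{same suslin-co-suslin sets}: the Suslin-co-Suslin sets of $olD(V,\kappa)$ are exactly $Hom^*_G$, each $\rho[T]$ for a $<\kappa$-absolutely complemented $T$ in some $V[G\upharpoonright\gamma]$; since the Woodins of $V[G\upharpoonright\gamma]$ in $(\gamma,\kappa)$ make $\rho[T]$ and its complement $<\delta$-homogeneously Suslin, a Martin--Steel analysis gives a scale on $\rho[T]$ with all norms of length below $\delta<\kappa$, so all Suslin cardinals of $olD$ are $<\kappa$ and $|Hom^*_G|^{V[G]}\le\kappa$. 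For the new derived model this no longer reaches every set of reals, which is why mouse capturing is assumed: then every $OD^D(x)$ prewellordering of $\R^*$ is captured by a (hod) mouse $\M$, and choosing $\gamma<\kappa$ above $x$ and running the comparison and $\Sigma$-genericity iterations relative to $V[G\upharpoonright\gamma]$ (legitimate since $\kappa$ is still a limit of Woodins over $V[G\upharpoonright\gamma]$) locates $\M$, with the relevant part of its strategy, inside $V[G\upharpoonright\gamma]$.

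The genuinely hard step is passing from ``each relevant prewellordering/mouse lies in some $V[G\upharpoonright\gamma]$'' to the uniform bound $\Theta_0^D<\kappa^+$: a priori the lengths could be cofinal in $\kappa^+$ as $\gamma\to\kappa$ (recall $\omega_1^D=\kappa$, so $\Theta_0^D>\kappa$ already, and what we are really claiming is $|\Theta_0^D|^V=\kappa$). This is where PFA must do the work, and I would organize the argument around a fine-structural analysis of $HOD^D$ below $\Theta_0^D$ --- a (hod) premouse whose ``Woodins'' track the Solovay sequence --- using PFA (failure of $\square_\kappa$, strong stationary reflection, $2^{\aleph_0}=\aleph_2$, and the mice PFA provides) in a covering- or core-model-induction-type argument to bound the ordinal height of that premouse below $\kappa^+$, uniformly. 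The interaction of PFA with the cofinality of $\kappa$ and with the distribution of the Woodins below $\kappa$ is what carries it.

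Granting $\Theta_0^D<\kappa^+$, clause (\ref{2nd conjecture}) follows: it is immediate if $\Theta^D=\kappa^+$, and if $\Theta^D<\kappa^+$ then $D$ is not ``$L(\R^*)$-like'' because $\kappa$ being a \emph{limit} of Woodins forces $\mathcal{A}$ --- already $Hom^*_G$ via the cofinally many Woodins contributing Suslin-co-Suslin sets of strictly increasing rank, and then the sets $\mathcal{A}$ generates above these --- to be Wadge-cofinal well past $\Theta_0^D$, so $\Theta_0^D$ is not the top of the Solovay sequence of $D$. (Under the stronger hypotheses of the introduction the same machinery bounds $\Theta_\alpha^D$ below $\kappa^+$ for larger $\alpha$, giving clause (\ref{2nd conjecture}) with room to spare.) For the old derived model the same outline, with Remark \ref{same suslin-co-suslin sets} in place of mouse capturing, yields the stronger Theorem \ref{intro thm for old}.
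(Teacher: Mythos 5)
The statement you set out to prove is Wilson's conjecture, which the paper does \emph{not} prove: it only establishes partial cases (Theorem \ref{intro thm for old} for the old derived model, Theorem \ref{intro thm for new} for the new one under mouse capturing), and clause (\ref{2nd conjecture}) is left entirely open apart from the remark that it implies clause (\ref{1st conjecture}). So your proposal overreaches, and the step that would constitute the actual content --- bounding $\Theta_0^{D(V,\kappa)}$ strictly below $\kappa^+$ --- is left as a placeholder (``this is where PFA must do the work \dots a covering- or core-model-induction-type argument to bound the height of $HOD^D$''), not an argument. The paper's mechanism for its partial results is quite different and concrete: assume $\Theta_0 = \kappa^+$; use mouse capturing to realize $D(V,\kappa)$ as $L(Lp(\R^*))$, so $Lp(\R^*)$ has height $(\kappa^+)^V$; translate by an $S$-construction to $Lp(\pi\cup H^V_\kappa)\in V$ of height $\kappa^+$; then run the fine-structural construction of an almost coherent sequence and the Schimmerling--Zeman-style refinement to get a $\square'_\kappa$- and hence $\square_\kappa$-sequence in $V$, contradicting $\neg\square_\kappa$ --- which is the only consequence of PFA used. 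You mention the failure of $\square_\kappa$ only in a list of PFA consequences and never build anything from the putative length-$\kappa^+$ object, so the heart of the proof is missing. (A smaller slip: ``all Suslin cardinals of $olD(V,\kappa)$ are $<\kappa$'' is inconsistent with $\omega_1$ of the derived model being $\kappa$, which you yourself note.)

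Your argument for clause (\ref{2nd conjecture}) is also wrong as stated. It is not true that ``$\kappa$ a limit of Woodins forces $\mathcal{A}$ to be Wadge-cofinal well past $\Theta_0^D$'': in the derived model of the minimal mouse with $\omega$ Woodin cardinals, the derived model is $L(\R^*)$ and $\Theta_0=\Theta$, even though cofinally many Woodins contribute Suslin-co-Suslin (indeed homogeneously Suslin) sets --- under $V=L(\R)$ those sets all have Wadge rank below $\delta^2_1<\Theta_0$, so richness of $Hom^*_G$ does not push past $\Theta_0$. That is exactly why clause (\ref{2nd conjecture}) needs PFA (that example is not a model of PFA) and why it remains a conjecture; any correct approach must use PFA for this clause too, not just for the $\kappa^+$ bound.
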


\begin{remark}
    The \ref{2nd conjecture}nd part of Wilson's conjecture implies the \ref{1st conjecture}st.
\end{remark}

\begin{remark}
    Assume $PFA + ``\kappa$ is a limit of Woodin cardinals'' is consistent. The assumption of $PFA$ in Conjecture \ref{trevor conjecture} in necessary.
\end{remark}
\begin{proof}
    Let $M$ be a model of $ZF + AD$. We may assume $M\models V = L(\R)$, so that $\Theta_0^M = \Theta^M$. Let $N$ be a Prikry-generic premouse over $M$ and let $\delta_\infty$ be the supremum of the Woodin cardinals of $N$.\footnote{See Chapter 6.6 of \cite{hacm}.} We will show $\Theta^{D(N,\delta_\infty)} = (\delta_\infty^+)^N$.

    There exists $H$ a $Col(\omega,<\delta_\infty)$-generic over $N$ such that $\R^*_H = \R^M$.\footnote{See Claim 6.46 of \cite{hacm}.} In particular, $L(\R^*_H) = M$. We must show $(\delta_\infty^+)^N = \Theta_0^M$. Clearly, $(\delta_\infty^+)^N \geq \Theta^M$.
    
    $N\subset M[H]$, where $H$ is $\P$-generic for $\P$ the Prikry-forcing with the Martin measure. Since $\P$ is $\Theta^M$-c.c, $\Theta^M$ is a cardinal of $N$. In particular, $(\delta_\infty^+)^N \leq \Theta^M$.
\end{proof}

Wilson proved Theorem \ref{wilson thm} by constructing a coherent covering matrix for $\kappa^+$ in $V$ in the event that $\Theta_0$ of the derived model is $\kappa^+$. But by a theorem of Viale, there is no coherent covering matrix for $\kappa^+$ assuming $PFA$. For $\kappa$ of uncountable cofinality, we cannot use coherent covering matrices. Instead, we'll make use of the failure of square principles. 

\subsection{Square Sequences}

\begin{definition}
    We call $\langle C_\alpha : \alpha < \lambda\rangle$ a coherent sequence iff for all limit $\alpha< \lambda$, $C_\alpha$ is a club subset of $\alpha$ and $C_\beta = \beta \cap C_\alpha$ whenever $\beta \in lim(C_\alpha)$.
\end{definition}

\begin{definition}
    We say $\square_\kappa$ holds if there is a coherent sequence $\langle C_\alpha : \alpha < \kappa^+\rangle$ such that $cof(\alpha) < \kappa \implies |C_\alpha| < \kappa$.
\end{definition}

\begin{definition}
    Suppose $S\subseteq \lambda$ is a club. We say a sequence $\vec{D} = \langle D_\alpha : \alpha \in S\rangle$ is almost coherent if for all $\alpha,\beta\in S$
    \begin{enumerate}
        \item $D_\alpha \subseteq S$ and is a closed subset of limit ordinals below $\alpha$, 
        \item $cof(\alpha)>\omega \implies D_\alpha$ is unbounded in $\alpha$, and
        \item $\beta \in D_\alpha \implies D_\beta = \beta \cap D_\alpha$.
    \end{enumerate}
\end{definition}

\begin{definition}
\label{square prime definition}
    We say $\square'_\kappa$ holds if there is a sequence $\vec{C} = \langle C_\alpha : \alpha < \kappa^+\rangle$ such that $\vec{C}$ is almost coherent and for all $\alpha < \kappa^+$, $ot(C_\alpha) \leq \kappa$.
\end{definition}

\begin{remark}
\label{square' on club implies square'}
    Suppose $ZF + \square'_\kappa$ holds on a club --- that is, there is a club $S\subseteq \kappa^+$ and an almost coherent sequence $\vec{C} = \langle C_\alpha : \alpha\in S\rangle$ so that $\alpha\in S \implies ot(C_\alpha) \leq\kappa$. Then $\square'_\kappa$ holds.
\end{remark}

\begin{remark}
\label{square' implies square}
    $ZFC + \square'_\kappa$ implies $\square_\kappa$.\footnote{See Lemma 5.1 of Chapter III of \cite{Devlin}.}
\end{remark}

For our main theorems, we will build $\square_\kappa$-sequences in the stages suggested by the definitions above. Inside a derived model, we will construct an almost coherent sequence. The particular way our almost coherent sequence is constructed will allow us to turn it into a $\square'_\kappa$-sequence on a club by a technique from \cite{zssquare} and Remark \ref{square' on club implies square'}. Then in a model of $ZFC$ we can rearrange this as a $\square_\kappa$-sequence by Remark \ref{square' implies square}.

\section{Result in Old Derived Model}

\begin{theorem}
\label{result in old derived model}
    Suppose $\kappa$ is a limit of Woodin cardinals and $\neg \square_\kappa$. Then $\Theta^{olD(V,\kappa)} < \kappa^+$.
\end{theorem}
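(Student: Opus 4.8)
The plan is to argue by contraposition: assuming $\Theta^{olD(V,\kappa)}=\kappa^+$, I build a $\square_\kappa$-sequence, contradicting $\neg\square_\kappa$. First, the reduction to this assumption. Fix $G$ which is $Col(\omega,<\kappa)$-generic over $V$. Since $|Col(\omega,<\kappa)|=\kappa$ the forcing is $\kappa^+$-c.c., so $(\kappa^+)^V=(\kappa^+)^{V[G]}$; and since $\kappa$ is a limit of inaccessibles a routine count gives $|\R^*_G|^{V[G]}=\kappa$. Hence any surjection $\R^*_G\to\alpha$ lying in $olD(V,\kappa)$ is a surjection of $V[G]$ onto an ordinal of cardinality $\le\kappa$, so $\Theta^{olD(V,\kappa)}\le(\kappa^+)^V$ always; negating the conclusion therefore gives exactly $\Theta^{olD(V,\kappa)}=(\kappa^+)^V=:\lambda$, which I assume henceforth.

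Work inside $N:=olD(V,\kappa)\models AD^+$, where $\Theta^N=\lambda$. The first step is to construct in $N$ an almost coherent sequence on a club in $\lambda$, carrying the auxiliary data the later thinning will use. Stratify $N$ by its Jensen hierarchy $\langle N_\xi:\xi\in\mathrm{Ord}\rangle$ (this makes sense since $N=L(Hom^*_G,\R^*_G)$, with condensation and uniformly definable Skolem-type functions whose witnessing choices are supplied by the Moschovakis coding lemma). Let $S_0$ be the set of $\alpha<\lambda$ of cofinality $\omega$ such that the hull $H_\alpha$ of $\R^*_G\cup\alpha$ in $N$ satisfies $H_\alpha\cap\lambda=\alpha$. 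Using $\Theta^N=\lambda$ one checks $S_0$ is unbounded in $\lambda$: iterating the hull operation $\omega$ times above any $\alpha_0<\lambda$ produces such an $\alpha$, each intermediate stage staying below $\lambda$ because it is a surjective image of $\R^*_G$ and $\lambda=\Theta^N$. Put $S:=\mathrm{cl}(S_0)$ and $D_\alpha:=S\cap\alpha$ for $\alpha\in S$. Then $\vec D=\langle D_\alpha:\alpha\in S\rangle$ is almost coherent: each $D_\alpha$ is a closed set of limit ordinals below $\alpha$ contained in $S$; coherence is immediate since $\beta\in D_\alpha$ gives $D_\beta=S\cap\beta=\beta\cap D_\alpha$; and if $cof(\alpha)>\omega$ then $\alpha\notin S_0$, so $\alpha$ is a limit point of $S_0$ and $D_\alpha$ is unbounded in $\alpha$. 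For each $\alpha\in S_0$ I also record the canonical surjection $f_\alpha:\R^*_G\to H_\alpha$ (hence onto $\alpha$); these cohere along $\vec D$.

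Next, pass to $V[G]$, fix a bijection $b:\R^*_G\to\kappa$, and apply the technique of \cite{zssquare}: feeding $b$ and the coherent surjections $\langle f_\alpha\rangle$ into it, one replaces the $D_\alpha$ by coherent subfamilies indexed through $\kappa$ and so converts $\vec D$ into a $\square'_\kappa$-sequence on a club $S'\subseteq\lambda$ — the order-type bound $\kappa$ rather than $\lambda$ being precisely what $|\R^*_G|^{V[G]}=\kappa$ affords, and the particular form of $\vec D$ being what lets the thinning respect coherence. By Remark \ref{square' on club implies square'} this gives $\square'_\kappa$ in $V[G]$, and then by Remark \ref{square' implies square}, applied in the $ZFC$-model $V[G]$, $\square_\kappa$ in $V[G]$. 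Finally I push down to $V$: since $\square'_\kappa$ asserts only a first-order, cofinality-free (hence fully absolute) property of its witnessing sequence, it suffices to exhibit a $\square'_\kappa$-sequence in $V$, and one arranges this by running the construction off data already present in $V$ — the $<\kappa$-homogeneously Suslin trees underlying $Hom^*_G$ and the ordinals indexing the filtration — so that the resulting sequence belongs to $V$; then $V\models\square'_\kappa$, so $V\models\square_\kappa$ by Remark \ref{square' implies square}, contradicting $\neg\square_\kappa$.

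The step I expect to be the main obstacle is the thinning: obtaining local clubs of order type $\le\kappa$ while preserving coherence, which forces one to build $\vec D$ in exactly the shape \cite{zssquare} can process and to interlock it with the identity $|\R^*_G|^{V[G]}=\kappa$. A second delicate point — and the reason the construction must be set up with the homogeneous trees in view — is guaranteeing that the final $\square'_\kappa$-sequence is an element of $V$ and not merely of $V[G]$; failing a direct argument, one would instead show separately that $Col(\omega,<\kappa)$ adds no $\square'_\kappa$-sequence on $(\kappa^+)^V$, so that $\square'_\kappa$, being absolute, already holds in $V$.
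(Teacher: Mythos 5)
There is a genuine gap, and it sits exactly where you predicted: the thinning step. Your almost coherent sequence is $D_\alpha = S\cap\alpha$ for a club $S$, which is \emph{trivially} coherent and carries no structure whatsoever beyond being a club; but converting an arbitrary club into a coherent sequence with order types $\leq\kappa$ is precisely the content of $\square_\kappa$, and no amount of ``feeding in'' a bijection $b:\R^*_G\to\kappa$ and surjections $f_\alpha$ can accomplish it. The Schimmerling--Zeman argument (Lemma 3.6 of \cite{zssquare}) is not a black box applicable to any almost coherent sequence: it requires that the $D_\alpha$ arise fine-structurally, i.e.\ that to each $\tau$ one has attached a level $N_\tau$ of an acceptable $J$-hierarchy with projectum $\rho^{N_\tau}_{n_\tau}$, standard parameter $p_\tau$, $\Sigma_1^{(n_\tau)}$ Skolem function, and $\Sigma_0^{(n_\tau)}$-preserving collapse maps $\sigma_{\bar\tau\tau}$ with critical point $\bar\tau$ for $\bar\tau\in D_\tau$; the sets $C'_\tau$ are defined from hulls $X_\tau(\xi)$ inside $N_\tau$ and the order-type bound comes from the strictly increasing ordinals $\xi^\tau_\iota<\kappa$. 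Your hulls $H_\alpha$ of $\R^*_G\cup\alpha$ in $olD(V,\kappa)$ supply none of this (and fine structure for $L(Hom^*,\R^*)$, a model built over a non-well-orderable set, is not available in the required form). Indeed, if your thinning worked for a bare club, then $ZFC$ plus ``$\kappa$ is a limit of Woodin cardinals'' would outright prove $\square_\kappa$ (one always has $|\R^*_G|^{V[G]}=\kappa$ and such clubs), contradicting the very situation the theorem is meant for, namely PFA, where $\square_\kappa$ fails.

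The second unresolved point is where the sequence lives. You construct in $olD(V,\kappa)$ and then in $V[G]$ and hope to ``push down'' to $V$ by absoluteness or by showing $Col(\omega,<\kappa)$ adds no such sequence; neither is justified (downward transfer of $\square'_\kappa$ from a generic extension is not an absoluteness fact, and the no-new-sequence claim is left unproved). The paper's key move handles both problems at once: it shows (Claim \ref{size of hom}) that $(Hom^*,\R^*)$ has a name coded by a set $CODE\subset H^V_\kappa$, and replaces the derived model by $M=L(H^V_\kappa\cup CODE)\subseteq V$ with $olD(V,\kappa)\subseteq M[G]$, so that $\Theta^{olD(V,\kappa)}=\kappa^+$ forces $M$ to compute $\kappa^+$ up to $(\kappa^+)^V$. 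Since $M$ is an $L$-style model over a set of size $\kappa$ lying in $V$, the construction of \cite{coherent} produces the almost coherent sequence \emph{with} the attached fine-structural data $(N_\tau,p_\tau,\tilde h_\tau,\sigma_{\bar\tau\tau})$ inside $V$, and then the \cite{zssquare} thinning can be run in $V$ using an enumeration of $H^V_\kappa\cup CODE$ in order type $\kappa$, yielding $\square'_\kappa$ on a club and hence $\square_\kappa$ in $V$ via Remarks \ref{square' on club implies square'} and \ref{square' implies square}. Without some analogue of this ``pull the derived model's $\Theta$ into a fine-structural model inside $V$'' step, your outline cannot be completed.
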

\begin{proof}
    Let $G$ be $Col(\omega,<\kappa)$-generic over $V$..

    \begin{claim}
    \label{size of hom}
        There is a $Col(\omega,<\kappa)$-name $\pi$ for $(Hom^*,\R^*)$ and a code $CODE$ for $\pi$ such that $CODE \subset H^V_\kappa$.
    \end{claim}
    \begin{proof}
        If $x\in \R^*$ then $x\in V[G\upharpoonright \alpha]$ for some $\alpha < \kappa$, so $x$ has a $Col(\omega,<\kappa)$-name in $H^V_\kappa$. Then there is a name $\pi_2$ for $\R^*$ contained in $H^V_\kappa$. And any $A^* \subseteq \R^*$ has a $Col(\omega,<\kappa)$-name $\dot{A}^*$ contained in $H^V_\kappa$ (if $\sigma$ is any name for $A^*$, we can take $\dot{A}^* = \{(\dot{x},p): \dot{x}\in H^V_\kappa \wedge p \Vdash^V_{Col(\omega,<\kappa)} \dot{x}\in\sigma\}$).

        Since $\kappa$ is a strong limit cardinal in $V[G\upharpoonright\alpha]$, $|Hom_{<\kappa}^{V[G\upharpoonright\alpha]}|^{V[G\upharpoonright\alpha]} < \kappa$. Then let $\mathcal{A_\alpha}\in V$ be a $Col(\omega,\gamma)$-name for $Hom_{<\kappa}^{V[G\upharpoonright\alpha]}$ such that $|\mathcal{A}_\alpha| <  \kappa$. For each $A\in Hom_{<\kappa}^{V[G\upharpoonright\alpha]}$, there is a unique $A^*\subseteq V[G]$ such that $A^* = \rho[T]$ for some tree $T$ witnessing $A\in Hom_{<\kappa}^{V[G\upharpoonright\alpha]}$. Let $\mathcal{A}_\alpha^*$ be a $Col(\omega,<\kappa)$-name (in $V$) such that
        \begin{enumerate}
            \item $|\mathcal{A}_\alpha^*| < \kappa$,
            \item $\bigcup \mathcal{A}_\alpha^*\subset H^V_\kappa$, and
            \item $\emptyset \Vdash (\mathcal{A}_\alpha^*)_G = \{(A_{G\upharpoonright\alpha})^* : A \in \mathcal{A}_\alpha\}$.
        \end{enumerate}

        Let $\pi_2 = \bigcup_{\alpha < \kappa} \mathcal{A}^*_\alpha$.

        Let $\pi$ be a natural\footnote{I.e. defined in some reasonable way.} name for the pair $(\pi_1,\pi_2)$. $\pi$ is a name for $(Hom^*,\R^*)$ and since $|trcl(\pi)| = \kappa$, $\pi$ can be coded by a set $CODE \subset H^V_\kappa$.
    \end{proof}

    Let $M = L(H^V_\kappa,CODE)$. Then $olD(V,\kappa)\subseteq M[G]$, since $\pi_1,\pi_2\in M[G]$, $\pi_1[G] = \R^*$, and $\pi_2[G] = Hom^*$.

    Suppose $\Theta^{olD(V,\kappa)} = \kappa^+$. Then $\Theta^{M[G]} = \kappa^+$, since $olD(V,\kappa)\subseteq M[G] \subseteq V[G]$. Since $G$ does not collapse any cardinals above $\kappa$, it follows that $\Theta^M = \kappa^+$. Then in $M$, we have a club $S\subseteq \kappa^+$ and an almost coherent sequence $\vec{D} = \langle D_\alpha : \alpha \in S\rangle$. This is by the proof of \cite{coherent}: Replace $Lp^{^G\Sigma}(\R)$ by $L(H^V_\kappa \cup CODE)$ in their argument. Then $S$ and $D_\alpha$ are defined just as in \cite{coherent}, except we restrict $S$ to ordinals greater than $\kappa$ and only include ordinals above $\kappa$ in $D_\alpha$.
    
    Note $M\subseteq V$, so $\vec{D}\in V$. Working in $V$, we will transform $\vec{D}$ into a sequence $\vec{C}$ of length $\kappa^+$ realizing $\square_\kappa$ holds. This is by now a standard argument, but we outline it below for the reader's convenience. The first step is to get a $\square'_\kappa$-sequence on a club.
    
    \begin{lemma}
    \label{square' from old derived model}
        There is a club $S\subset \kappa^+$ and a sequence $\vec{C}' = \langle C'_\alpha : \alpha \in S \rangle$ such that
        \begin{enumerate}
            \item $C'_\alpha$ is a closed set of ordinals below $\alpha$, 
            \item $cof(\alpha)>\omega \implies C'_\alpha$ is unbounded in $\alpha$,
            \item $ot(C'_\alpha)\leq \kappa$, and
            \item $\beta \in C'_\alpha \implies C'_\beta = \beta \cap C'_\alpha$.
        \end{enumerate}
    \end{lemma}
    \begin{proof}
        Recall our sequence $\vec{D}$ was obtained from \cite{coherent}. We adopt the notation of that proof. In particular, for $\tau\in S$,

        \begin{itemize}
            \item $N_\tau$ is the least initial segment of $L(H_\kappa^V \cup CODE)$ such that there is $n_\tau<\omega$ satisfying $\rho^{N_\tau}_{n_\tau} \geq \tau$ and $\rho^{N_\tau}_{n_\tau+1} = H_\kappa^V \cup CODE$,
            \item $p_\tau = p^{N_\tau}_{n_\tau}$,
            \item $\tilde{h}_\tau$ is the $\Sigma_1^{(n_\tau)}$ Skolem function for $N_\tau$,
            \item and if $\bar{\tau}\in D_\tau$, then $\sigma_{\bar{\tau}\tau}: N_{\bar{\tau}}\to N_\tau$ is a $\Sigma_0^{(n_\tau)}$-preserving map such that $crit(\sigma_{\bar{\tau}\tau}) = \bar{\tau}$, $\sigma_{\bar{\tau}\tau}(\bar{\tau}) = \tau$, and $\sigma_{\bar{\tau},\tau}(p_{\bar{\tau}}) = p_\tau$.
        \end{itemize}

        We will follow closely the proof of Lemma 3.6 of \cite{zssquare}.

        Let $\langle x_\alpha : \alpha < \kappa\rangle$ be an enumeration of $H_\kappa^V \cup CODE$.\footnote{This exists (in $V$) because $\kappa$ is a limit of Woodin cardinals and $CODE\subset H_\kappa^V$. We need to work in $V$ for this so that we can use $AC$, which is why we defined our sequence $\vec{D}$ from $L(H^V_\kappa \cup CODE)$ instead of from $L(Hom^*,\R^*)$.} Let $X_\tau(\xi)$ be the $\Sigma^{(n_\tau)}_1$-hull of $\{x_\xi,p_\tau\}$ in $N_\tau$. Define the sequence $\langle \tau_\iota, \xi_\iota\rangle$ as follows:
        \begin{enumerate}
            \item $\tau_0 = \min(D_\tau\cup\{\tau\})$.
            \item $\xi^\tau_\iota = \text{least } \xi<\kappa$ s.t. $X_\tau(\xi)\nsubseteq \text{range}(\sigma_{\tau_\iota\tau})$.
            \item $\tau_{\iota+1} = \text{least } \bar{\tau}\in D_\tau\cup\{\tau\}$ s.t. $X_\tau(\xi^\tau_\iota)\subseteq \text{range}(\sigma_{\bar{\tau}\tau})$.
            \item For limit $\gamma$, $\tau_\gamma = \sup\{\tau_\iota:\iota < \gamma\}$.
            \item $\iota_\tau = \text{least } \iota$ s.t. $\tau_\iota = \tau$.
        \end{enumerate}

        \begin{remark}
            $\tau_\iota < \tau \implies \xi^\tau_\iota$ exists. In particular, the construction does not stop until it reaches $\iota_\tau$.
        \end{remark}
        \begin{proof}
            Let $\xi < \kappa$ be s.t. $\tau_\iota = \tilde{h}_\tau(x_\xi,p_\tau)$. Then $X_\tau(\xi) \nsubseteq \text{range}(\sigma_{\tau_\iota \tau})$ since $\tau_\iota = crit(\sigma_{\tau_\iota \tau})$.
        \end{proof}

        Let $C'_\tau = \{\tau_\iota :\iota < \iota_\tau\}$. One can show $\vec{C}' = \langle C'_\tau : \tau \in S\rangle$ realizes the lemma just as in \cite{zssquare}.

    \end{proof}
    
    By Lemma \ref{square' from old derived model}, together with Remarks \ref{square' on club implies square'} and \ref{square' implies square}, $V\models \square_\kappa$, a contradiction.
\end{proof}

Theorem \ref{intro thm for old} is immediate from Theorem \ref{result in old derived model} and that $PFA \implies \neg \square_\kappa$.

\section{Results in New Derived Model}
\label{new derived model section}

Our results in the new derived model rely upon mouse capturing and other similar principles.

\begin{definition}
    Assume $AD^+$. Mouse capturing (MC) holds if for every $x\in \R$ and every $y\in OD(x)\cap \R$, there is an $x$-mouse $M$ such that $y\in M$.
\end{definition}

The mouse set conjecture says $MC$ holds in every model of $AD^+ + V = L(P(\R))$. In particular, we expect $MC$ is true in any derived model. We use only the following consequence of mouse capturing. Let 
\begin{align*}
    Lp(A) = \bigcup \{M : M \text{ is a sound } A \text{-mouse projecting to } A\}.
\end{align*}
$Lp(A)$ can be reorganized as an $A$-premouse. Assuming $AD^+ + V = L(P_{\Theta_0}(\R)) + MC$, $V = L(Lp(\R))$.\footnote{This is a special case of the result of \cite{mscfsor}.}

Our first result of this section gives Theorem \ref{intro thm for new}.

\begin{theorem}
\label{theta_0 < kappa+}
    Suppose $V\models \neg \square_\kappa$, $\kappa$ is a limit of Woodin cardinals and $D(V,\kappa)\models MC$. Then $\Theta_0^{D(V,\kappa)} < \kappa^+$.
\end{theorem}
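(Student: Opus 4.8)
The plan is to imitate the proof of Theorem~\ref{result in old derived model}, but with $D(V,\kappa)$ in place of $olD(V,\kappa)$. The key difference is that the new derived model need not be contained in a small generic extension of an inner model built over $H^V_\kappa$: the collection $\mathcal{A}$ of all sets of reals $A$ with $L(A,\R^*)\models AD^+$ can be genuinely larger than $Hom^*$, and there is no obvious way to name all of $\mathcal{A}$ by a code inside $H^V_\kappa$. So I would not try to reconstruct $D(V,\kappa)$ inside some $M[G]$ with $M\subseteq V$. Instead I would argue directly inside the derived model, assuming toward a contradiction that $\Theta_0^{D(V,\kappa)}=\kappa^+$, and use $MC$ to get a fine-structural object to hull.

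\textbf{Key steps.} First, suppose $\Theta_0^{D(V,\kappa)}=\kappa^+$. Since $\Theta_0$ is the sup of ordinals $\alpha$ admitting an $OD$-from-a-real surjection $\R^*\to\alpha$, this means $\kappa^+$ is ``$OD$-small'' in the derived model, so in particular $D(V,\kappa)$ (or the relevant initial segment $L(P_{\Theta_0}(\R^*))$ of it) sees $\kappa^+$ as a limit of ordinals coded by sets of reals in its $OD$ hierarchy. Second, invoke $MC$: by the quoted consequence of mouse capturing, $L(P_{\Theta_0}(\R^*)) = L(Lp(\R^*))$, so the ordinals below $\kappa^+$ are levels of the mouse-theoretic stack $Lp(\R^*)$ over $\R^*$. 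Third, run the argument of \cite{coherent} with $Lp(\R^*)$ in the role that $L(H^V_\kappa\cup CODE)$ played in Theorem~\ref{result in old derived model}: for each $\tau$ in a club $S\subseteq\kappa^+$, let $N_\tau\trianglelefteq Lp(\R^*)$ be the least level with a projectum arrangement as in Lemma~\ref{square' from old derived model}, let $p_\tau$, $\tilde h_\tau$, $\sigma_{\bar\tau\tau}$ be as there, and obtain an almost coherent sequence $\vec D = \langle D_\alpha:\alpha\in S\rangle$ inside the derived model. Fourth, pass to $V$. Here is the delicate point compared to the old case: in \cite{zssquare} and in Lemma~\ref{square' from old derived model}, the thinning of $\vec D$ to a $\square'_\kappa$-sequence on a club uses an enumeration $\langle x_\alpha:\alpha<\kappa\rangle$ of the underlying structure, which required $AC$ and hence required working in $V$ with the structure $H^V_\kappa\cup CODE$. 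Now the underlying structure is $\R^*$, and $|\R^*|=\kappa$ in $V[G]$, so in $V[G]$ there is an enumeration $\langle r_\alpha:\alpha<\kappa\rangle$ of $\R^*$; I would carry out the hull construction of Lemma~\ref{square' from old derived model} in $V[G]$ using this enumeration, producing a $\square'_\kappa$-sequence on a club in $V[G]$. Finally, since $\square'_\kappa$ holds on a club in $V[G]$ and $G$ is $Col(\omega,<\kappa)$-generic (which adds no subsets of $\kappa^+$ of size $\kappa^+$ that would matter — more precisely, $\kappa^+$ is not collapsed and the sequence can be reflected back), deduce $\square_\kappa$ in $V$, contradicting the hypothesis $\neg\square_\kappa$.

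\textbf{Main obstacle.} The main obstacle is the last step: one must be careful that constructing the $\square'_\kappa$-sequence in $V[G]$ rather than in $V$ does not break the reduction to $\square_\kappa$ in $V$. The fine-structural hull construction genuinely needs the well-ordering of $\R^*$, which lives only in $V[G]$, not in $V$ or in the derived model; but the square sequence we want must ultimately live in $V$. I expect the resolution is that the almost coherent sequence $\vec D$ itself lives in the derived model (hence in $V[G]$, and its defining properties are absolute), and that the thinned sequence $\vec C'$, while defined using the $V[G]$-enumeration, can be shown to be an element of $V$ — or, failing that, that $Col(\omega,<\kappa)$ being $\kappa^+$-cc and not collapsing $\kappa^+$ lets one pull a $\square_\kappa$-sequence back to $V$ from one in $V[G]$ via a standard absoluteness/reflection argument (as in \cite{zssquare}). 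Sorting out exactly which model each object lives in — the derived model, $M$, $V[G]$, or $V$ — and verifying the coherence clause survives the thinning, is where the real work lies; everything else is a faithful transcription of the old-derived-model proof and of \cite{coherent,zssquare}.
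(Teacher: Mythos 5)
There is a genuine gap, and it sits exactly where you located your ``main obstacle.'' Your plan builds the almost coherent sequence over $Lp(\R^*)$ inside the derived model and then does the Schimmerling--Zeman thinning in $V[G]$, using a generic enumeration of $\R^*$ in order type $\kappa$, hoping afterwards either that the thinned sequence happens to lie in $V$ or that some ``standard absoluteness/reflection'' pulls $\square_\kappa$ from $V[G]$ back to $V$. Neither hope is justified: the thinned sequence is defined from a wellordering of $\R^*$ that exists only in $V[G]$, so there is no reason it is in $V$; and there is no general theorem that a $\kappa^+$-c.c.\ (or $\kappa^+$-preserving) forcing such as $Col(\omega,<\kappa)$ cannot create a $\square_\kappa$-sequence when none exists in the ground model --- indeed $Col(\omega,<\kappa)$ makes $\kappa=\omega_1$, changes the cofinality of every ordinal of $V$-cofinality $<\kappa$ to $\omega$ (so even the clause ``$cof(\alpha)>\omega\implies C_\alpha$ unbounded'' is computed in the wrong model), and adds many new bounded subsets of $\kappa^+$. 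The contradiction must be with $\neg\square_\kappa$ \emph{in $V$}, so the square sequence must be constructed in $V$; the paper's proof of Theorem~\ref{result in old derived model} flags this explicitly (the enumeration step ``needs to work in $V$ so that we can use $AC$, which is why we defined $\vec D$ from $L(H^V_\kappa\cup CODE)$ instead of from $L(Hom^*,\R^*)$''), and your proposal does precisely the thing that remark warns against.

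The missing idea is the paper's key step: after using $MC$ to get $D(V,\kappa)=L(Lp(\R^*))$ of height $(\kappa^+)^V$, one does \emph{not} run \cite{coherent} over $Lp(\R^*)$ itself. Instead one fixes a $Col(\omega,<\kappa)$-name $\pi\subset H^V_\kappa$ for $\R^*$ and applies the $S$-construction ($P$-construction) to $Lp(\R^*)$ over $\pi\cup H^V_\kappa$, obtaining a premouse $S\in V$ with $S\trianglelefteq Lp(\pi\cup H^V_\kappa)$; hence $Lp(\pi\cup H^V_\kappa)$, computed over a set in $V$ of size $\kappa$, already has height $\kappa^+$. Now the entire \cite{coherent} construction of the almost coherent sequence and the \cite{zssquare} thinning (which needs a $\kappa$-enumeration of the underlying set, available in $V$ by $AC$) take place in $V$, yielding a genuine $\square_\kappa$-sequence in $V$ and the desired contradiction. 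Without this translation step your argument does not close, so the proposal as written does not prove the theorem.
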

\begin{proof}

Let $M = D(V,\kappa)$ and suppose for contradiction $\Theta_0^M = \kappa^+$. Let $G$ be the $Col(\omega,<\kappa)$-generic used in constructing $M$. Of course $\Theta^M \leq (\kappa^+)^{V[G]} \leq (\kappa^+)^V$, so our assumption implies $\Theta^M = \Theta_0^M$. Any derived model satisfies $V = L(P(\R^*))$, so in this case $M \models V = L(P_{\Theta_0}(\R^*))$. Then by the assumption of mouse capturing, $M = L(Lp(\R^*))$. In particular, the height of $Lp(\R^*)^M$ is $(\kappa^+)^V$.

Let $\pi$ be a $Col(\omega,<\kappa)$-name for $\R^*$ (in $V$) such that $\pi\subset H_\kappa^V$. Let $S$ be the output of the $S$-construction\footnote{First defined under the name $P$-construction in \cite{self-iter}.} in $Lp(\R^*)$ over $\pi\cup H^V_\kappa$. Then $S\in V$ and $S\trianglelefteq Lp(\pi\cup H^V_\kappa)$. In particular, $Lp(\pi \cup H^V_\kappa)$ has height $\kappa^+$.

By the argument of \cite{coherent}, there is (in $V$) a club $C\subseteq \kappa^+$ and an almost coherent sequence $\vec{D} = \langle D_\alpha : \alpha \in C\rangle$.\footnote{ We can apply the argument of \cite{coherent} because none of the extenders on the extender sequence of $S$ have critical point less than $\kappa$.}

$\vec{D}$ can be transformed into a $\square_\kappa$-sequence as in the proof of Theorem \ref{result in old derived model}, contradicting that $V\models \neg \square_\kappa$.

\end{proof}

We next adapt the technique above to show $\Theta_{\alpha+1} < \kappa^+$ if in place of our mouse capturing assumption from the previous theorem, we assume $D(V,\kappa) = L(Lp^\Sigma(\R))$ for a sufficiently nice iteration strategy $\Sigma$.

\begin{theorem}
\label{theta < kappa+}
    Suppose $V\models \neg \square_\kappa$, $\kappa$ is a limit of Woodin cardinals and $D(V,\kappa)\models ``\Theta_{\alpha+1}$ exists'' $ + $``there is a hod pair $(P,\Sigma)$ such that $P_{\Theta_{\alpha+1}}(\R^*) = Lp^{\Sigma}(\R^*) \cap P(\R^*)$, $\Sigma$ is fullness-preserving, and has branch condensation.\footnote{In the sense of Definitions 1.36, 0.20, and 2.14 of \cite{hm&msc}.} Then $\Theta_{\alpha+1}^{D(V,\kappa)} < \kappa^+$.
\end{theorem}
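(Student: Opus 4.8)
The plan is to run the proof of Theorem \ref{theta_0 < kappa+} with the ordinary lower-part operator $Lp$ replaced by the strategy lower-part operator $Lp^\Sigma$. Write $M = D(V,\kappa)$ and suppose toward a contradiction that $\Theta_{\alpha+1}^{M} = \kappa^+$. Since ``$\Theta_{\alpha+1}$ exists'' means $\Theta_\alpha^M < \Theta^M$, and $\Theta^M \leq (\kappa^+)^{V[G]} = (\kappa^+)^V$ just as in the proof of Theorem \ref{theta_0 < kappa+}, our assumption in fact gives $\Theta_{\alpha+1}^M = (\kappa^+)^V$. Let $(P,\Sigma)\in M$ be the hod pair from the hypothesis, so $P_{\Theta_{\alpha+1}}(\R^*) = Lp^{\Sigma}(\R^*)\cap P(\R^*)$ in $M$, with $\Sigma$ fullness-preserving and having branch condensation. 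The first observation is that $Lp^\Sigma(\R^*)$, as computed in $M$, then has ordinal height exactly $(\kappa^+)^V$: it contains sets of reals of every Wadge rank below $\kappa^+$, so its height is at least $\kappa^+$; and $Lp^\Sigma(\R^*)\subseteq M\subseteq V[G]$ with $|\R^*|^{V[G]} = \kappa$, so its height is at most $(\kappa^+)^{V[G]} = (\kappa^+)^V$. (Note that, unlike in Theorem \ref{theta_0 < kappa+}, we do not need $\Theta_0^M = \Theta^M$ here, only that this particular $\Sigma$-lower-part model reaches $\kappa^+$; this is what allows us to reach $\Theta_{\alpha+1}$ rather than just $\Theta_0$.)

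The next step is to reflect the relevant structure into $V$. Since $\Sigma$ is fullness-preserving and has branch condensation, the fragment $\Lambda$ of $\Sigma$ needed to run the $S$-construction --- essentially $\Sigma$ restricted to iteration trees lying in $V$ --- belongs to $V$ and remains fullness-preserving with branch condensation; this is where the two hypotheses on $\Sigma$ are used. Fixing, as in the proof of Theorem \ref{theta_0 < kappa+}, a $Col(\omega,<\kappa)$-name $\pi\subseteq H_\kappa^V$ for $\R^*$, I would then run the $S$-construction in $Lp^\Sigma(\R^*)$ over $\pi\cup H_\kappa^V$, using $\Lambda$ to interpret the branch predicate. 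Its output $S$ lies in $V$ and satisfies $S\trianglelefteq Lp^{\Lambda}(\pi\cup H_\kappa^V)$; since the $S$-construction preserves ordinal height, $Lp^{\Lambda}(\pi\cup H_\kappa^V)$ has height $(\kappa^+)^V$.

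From here the argument is exactly as in the proofs of Theorems \ref{result in old derived model} and \ref{theta_0 < kappa+}: the argument of \cite{coherent} applied to $Lp^{\Lambda}(\pi\cup H_\kappa^V)$ yields, in $V$, a club $C\subseteq\kappa^+$ and an almost coherent sequence $\vec{D} = \langle D_\eta : \eta\in C\rangle$. This goes through because no extender on the extender sequences of the $S$-construction models has critical point below $\kappa$, and because the branch predicate of $Lp^{\Lambda}$ condenses well (again by fullness-preservation and branch condensation), so that the fine-structural $\Sigma^{(n)}_1$-hull and condensation manipulations of \cite{coherent} are undisturbed by the presence of the strategy predicate. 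Finally, $\vec{D}$ is transformed into a $\square_\kappa$-sequence exactly as in the proof of Theorem \ref{result in old derived model} --- via Lemma \ref{square' from old derived model} together with Remarks \ref{square' on club implies square'} and \ref{square' implies square} --- contradicting $V\models\neg\square_\kappa$.

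I expect the main obstacle to be the careful handling of the strategy predicate: verifying that the relevant fragment $\Lambda$ of $\Sigma$ really lands in $V$, and that it condenses well enough that (i) the $S$-construction over $\pi\cup H_\kappa^V$ produces a genuine initial segment of $Lp^{\Lambda}(\pi\cup H_\kappa^V)$, and (ii) the \cite{coherent}-style construction of the almost coherent sequence is not disrupted by the extra predicate. In the pure-extender cases of Theorems \ref{result in old derived model} and \ref{theta_0 < kappa+} these points were automatic; here they are precisely what the hypotheses ``fullness-preserving'' and ``branch condensation'' are there to secure, and a complete write-up would consist mostly of verifying them.
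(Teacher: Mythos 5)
Your skeleton (a strategy lower-part model over $\R^*$ of height $\kappa^+$, transferred by an $S$-construction to a lower-part model over $\pi\cup H_\kappa^V$ in $V$, then the \cite{coherent}-style almost coherent sequence and Lemma \ref{square' from old derived model}) is the same as the paper's, but there is a genuine gap at the step where you assert that ``the fragment $\Lambda$ of $\Sigma$ --- essentially $\Sigma$ restricted to iteration trees lying in $V$ --- belongs to $V$'' because $\Sigma$ is fullness-preserving with branch condensation. The hod pair $(P,\Sigma)$ lives in $D(V,\kappa)$: $P$ is a countable structure coded by a real of $\R^*$, so in general $P\in V[G\upharpoonright\gamma]$ for some $\gamma>0$ and $P\notin V$, and $\Sigma$ is only given by a Suslin representation via a tree in $V[G\upharpoonright\gamma]$. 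Thus ``$\Sigma\upharpoonright V$'' is not even a well-defined $V$-object, and fullness preservation plus branch condensation by themselves confer no definability over $V$. This is exactly the obstruction the paper flags (its remark that if $(P,\Sigma)=\rho[T]$ for $T\in V$ one could argue as in Theorem \ref{theta_0 < kappa+}), and the real content of the proof is how to get around it: first an absoluteness argument with mutually generic extensions shows $\Sigma\upharpoonright V[G\upharpoonright\gamma]\in V[G\upharpoonright\gamma]$; then, writing $G=H\times L$ with $H$ being $Col(\omega,\gamma)$-generic, one considers the finite variants $H_q$ of $H$ for $q\in Col(\omega,\gamma)$ and the corresponding pairs $(P_q,\Sigma_q)$, and uses fullness preservation and branch condensation --- via the comparison theorem (Theorem 2.47 of \cite{hm&msc}) applied in $V[H]$ --- to coiterate any two such pairs to a common pair. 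The direct limit $(Q,\Lambda)$ of all countable iterates of all the $(P_q,\Sigma_q)$ then admits symmetric terms, whence $Q\in V$ and $\Lambda\upharpoonright V\in V$; it is this $\Lambda$, a tail of $\Sigma$ and not a restriction of it, over which the $S$-construction is run. This Boolean-valued comparison is where the hypotheses on $\Sigma$ are actually used, and it is absent from your argument.

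A second missing ingredient is the paper's Claim that $Lp^{\Sigma}(\R^*)\subseteq Lp^{\Lambda}(\R^*)$ for any such tail $\Lambda$ (proved by showing that the strategy for $M_1^{\Sigma,\#}$, restricted to trees in $Lp^{\Lambda}(\R^*)$, is definable there, using branch condensation). Without it you only know that $Lp^{\Sigma}(\R^*)$ has height $(\kappa^+)^V$, not that $Lp^{\Lambda}(\R^*)$ does, and it is the latter that the $S$-construction needs in order to conclude that $Lp^{\Lambda}(\pi\cup H_\kappa^V)$ has height $(\kappa^+)^V$. (Your height computation for $Lp^{\Sigma}(\R^*)$ itself is fine: since $\Theta^M\leq(\kappa^+)^V$, the assumption forces $\Theta_{\alpha+1}^M=\Theta^M$, so $M=L(Lp^{\Sigma}(\R^*))$.) In short, the parts you defer as ``verification that $\Lambda$ lands in $V$ and condenses well'' are not routine checks secured by the stated hypotheses; they are the heart of the proof and require the comparison/direct-limit construction and the tail-strategy claim described above.
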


The assumptions on $D(V,\kappa)$ are known to hold in any model of $AD^+$ in which $\Theta_{\alpha+1}$ exists and a sufficient smallness condition on $D(V,\kappa)$ holds (see e.g. \cite{hm&msc}).

\begin{proof}[Proof of Theorem \ref{theta < kappa+}]
    Let $M = D(V,\kappa)$ and suppose $\Theta_{\alpha+1}^M = \kappa^+$. Then $M\models V = L(P_{\Theta_{\alpha+1}}(\R^*))$. Let $(P,\Sigma)$ be as in the statement of the theorem. So $M = L(Lp^{\Sigma}(\R^*))$.

    Let $G$ be the $Col(\omega,<\kappa)$-generic used to construct $M$, so that $\R^* = \R^M = \bigcup_{\gamma<\kappa}\R^{V[G\upharpoonright\gamma]}$.

    Note $P$ is coded by a real and $\Sigma$ is Suslin-co-Suslin in $M$.\footnote{See Theorem 5.9 of \cite{hm&msc}.} Then there is $\gamma < \kappa$ and a tree $T\in V[G\upharpoonright\gamma]$ such that $(\rho[T])^M$ codes $(P,\Sigma)$.\footnote{$T$ can be taken to be $<\kappa$-absolutely complemented, but we don't need this property.}

    \begin{claim}
        $\Sigma\upharpoonright V[G\upharpoonright\gamma]\in V[G\upharpoonright\gamma]$
    \end{claim}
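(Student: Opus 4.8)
The plan is to show that a sufficiently large initial segment of $\Sigma$ — enough to compute $\Sigma \upharpoonright V[G\upharpoonright\gamma]$ — is already captured inside $V[G\upharpoonright\gamma]$ by virtue of the tree $T$ living there. The key point is that $T \in V[G\upharpoonright\gamma]$ and $(\rho[T])^M$ codes the pair $(P,\Sigma)$, where $M = D(V,\kappa)$ is the derived model computed from the full generic $G$. Since $\R^* = \bigcup_{\gamma' < \kappa} \R^{V[G\upharpoonright\gamma']}$, for any $\gamma' < \kappa$ the reals of $V[G\upharpoonright\gamma']$ form an initial segment of $\R^*$, and $\Sigma$ acts on (countable) iteration trees on $P$, which are themselves coded by reals in $\R^*$. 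So I would argue: any iteration tree $\T$ on $P$ lying in $V[G\upharpoonright\gamma]$ is coded by a real $x_\T \in \R^{V[G\upharpoonright\gamma]} \subseteq \R^*$, and the branch $\Sigma(\T)$ is read off from $(\rho[T])^M$ applied to $x_\T$. The first step is therefore to observe that for each $\gamma' \geq \gamma$ with $\gamma' < \kappa$, the restriction $\Sigma \upharpoonright V[G\upharpoonright\gamma']$ is definable over $V[G\upharpoonright\gamma']$ from $T$ via the Suslin representation: a tree $\T$ in $V[G\upharpoonright\gamma']$ has cofinal branch $b$ under $\Sigma$ iff some real coding $(\T, b)$ is in $\rho[T]$ as computed in $V[G\upharpoonright\gamma']$ — and here one uses that $\rho[T]$ is correctly computed by any model containing $T$ and the relevant reals, i.e. $T$ is absolutely complemented or at least that $\rho[T]$ is sufficiently absolute between $V[G\upharpoonright\gamma']$ and $V[G]$.

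The second ingredient is homogeneity of the Levy collapse. Write $G = (G\upharpoonright\gamma) \ast G'$ where $G'$ is $Col(\omega, [\gamma,\kappa))$-generic over $V[G\upharpoonright\gamma]$. The object $\Sigma \upharpoonright V[G\upharpoonright\gamma]$ is a subset of $V[G\upharpoonright\gamma]$, and it is definable over $V[G]$ (hence over $V[G\upharpoonright\gamma][G']$) from $T$ and the parameter $P$, both of which lie in $V[G\upharpoonright\gamma]$; indeed by the previous paragraph, for $\T \in V[G\upharpoonright\gamma]$, whether $b = \Sigma(\T)$ is decided already in $V[G\upharpoonright\gamma]$ (or in $V[G\upharpoonright\gamma']$ for any $\gamma' < \kappa$ with $\gamma'\geq\gamma$ large enough to contain $b$; but $\Sigma$ restricted to trees in $V[G\upharpoonright\gamma]$ produces branches which, for the purpose of applying $\Sigma$ again, we only need as coded by reals in $\R^*$). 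The cleanest route: $\Sigma \upharpoonright V[G\upharpoonright\gamma]$ is $OD$ in $V[G]$ from $T$ (a parameter in $V[G\upharpoonright\gamma]$), and by the homogeneity of $Col(\omega,[\gamma,\kappa))$ any subset of $V[G\upharpoonright\gamma]$ which is definable in $V[G\upharpoonright\gamma][G']$ from parameters in $V[G\upharpoonright\gamma]$ already lies in $V[G\upharpoonright\gamma]$. Hence $\Sigma\upharpoonright V[G\upharpoonright\gamma] \in V[G\upharpoonright\gamma]$.

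More precisely, I would carry out the following steps. First, fix the decomposition $G = (G\upharpoonright\gamma)\ast G'$ and note $P, T \in V[G\upharpoonright\gamma]$. Second, verify that for an iteration tree $\T$ on $P$ with $\T \in V[G\upharpoonright\gamma]$ and a cofinal well-founded branch $b$, the statement ``$(\T,b)$ is coded by a real in $\rho[T]$'' is computed the same way in $V[G\upharpoonright\gamma][G']$ as in $M = D(V,\kappa)$ — this uses that $\rho[T]$, as a set of reals, is projected the same way by any inner model containing $T$ and the reals in question, together with the fact that $(\rho[T])^M$ genuinely codes the graph of $\Sigma$ on such trees (as $\Sigma$ is an iteration strategy defined on all of $M$, in particular on countable trees in $V[G\upharpoonright\gamma]$, which form a subcollection of the trees in $\R^*$). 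Third, conclude that $\Sigma\upharpoonright V[G\upharpoonright\gamma]$ — viewed as the set of pairs $(\T, \Sigma(\T))$ with $\T \in V[G\upharpoonright\gamma]$ — is $OD^{V[G\upharpoonright\gamma][G']}(T)$. Fourth, apply the homogeneity of $Col(\omega, [\gamma,\kappa))$ over $V[G\upharpoonright\gamma]$ (this poset is weakly homogeneous, so $OD$-with-ground-model-parameters subsets of the ground model are in the ground model) to deduce $\Sigma\upharpoonright V[G\upharpoonright\gamma] \in V[G\upharpoonright\gamma]$.

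The main obstacle I anticipate is the absoluteness in the second step: one needs that $(\rho[T])^M$ and $(\rho[T])^{V[G\upharpoonright\gamma][G']}$ agree on the relevant reals (those coding pairs $(\T,b)$ with $\T\in V[G\upharpoonright\gamma]$), so that the strategy being read off inside $M$ is the same as the one read off in the forcing extension. If $T$ is $<\kappa$-absolutely complemented — which, as the footnote notes, one may arrange — this is immediate: $\rho[T]$ is computed absolutely between any two of the relevant models, and in particular $\rho[T]$ restricted to $\R^{V[G\upharpoonright\gamma']}$ lies in $V[G\upharpoonright\gamma']$ for each $\gamma' < \kappa$. The remaining care is purely bookkeeping: making sure that ``$\Sigma$ on trees in $V[G\upharpoonright\gamma]$'' is the right object to feed back into the homogeneity argument, i.e. that we are not accidentally asking for branches of trees that do not themselves lie in $V[G\upharpoonright\gamma]$. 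Since the claim as stated only concerns $\Sigma\upharpoonright V[G\upharpoonright\gamma]$, this is not a real difficulty.
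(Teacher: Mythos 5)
Your argument is correct in substance but proceeds by a different mechanism than the paper. The paper fixes a single tree $S\in V[G\upharpoonright\gamma]$ according to $\Sigma$ and argues that the branch selected by $\rho[T]$ is the same in $V[G]$ and in $V[H]$ for any generic $H$ with $H\upharpoonright\gamma=G\upharpoonright\gamma$: one builds a third generic $H'$ with $H'\upharpoonright\gamma=G\upharpoonright\gamma$ and $G,H\in V[H']$, uses that $\rho[T]$ codes a strategy (hence selects a unique branch) to get $b=b'$, and concludes $b\in V[G\upharpoonright\gamma]$ from this generic-independence (in effect, $b$ lies in the intersection of mutually generic extensions of $V[G\upharpoonright\gamma]$). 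You instead observe that the graph of $\Sigma$ on trees in $V[G\upharpoonright\gamma]$ is definable in $V[G]$ from $T$ and parameters in $V[G\upharpoonright\gamma]$ via the Suslin representation, and then invoke weak homogeneity of the tail forcing $Col(\omega,[\gamma,\kappa))$ so that the defining statements are decided by the empty condition and can be evaluated in $V[G\upharpoonright\gamma]$. These are the two standard routes to the same fact (tail-independence of an object definable from ground-model parameters), and both rest on the same absoluteness point you isolate, namely that membership in $\rho[T]$ is computed the same way in all the relevant models containing $T$ (wellfoundedness absoluteness suffices here; the absolutely complemented refinement you mention is a safe but unnecessary strengthening). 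Your route has the advantage of handling all trees in $V[G\upharpoonright\gamma]$ uniformly in one application of homogeneity, whereas the paper's argument is tree-by-tree; the paper's route avoids any appeal to the homogeneity lemma with a ground-model predicate.

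One small wrinkle in your step four: the lemma ``$OD$-with-ground-model-parameters subsets of the ground model are in the ground model'' is applied to the set of pairs $(\T,\Sigma(\T))$, but that set is only a subset of $V[G\upharpoonright\gamma]$ once you already know each branch $\Sigma(\T)$ lies in $V[G\upharpoonright\gamma]$, which is essentially the claim. The fix is immediate within your framework: apply homogeneity first to the relation $\{(\T,\beta):\T\in V[G\upharpoonright\gamma],\ \beta\in\Sigma(\T)\}$, whose elements are pairs of ground-model objects (trees and ordinals) and which is definable in $V[G]$ from $T$ and ground-model parameters; this relation is then in $V[G\upharpoonright\gamma]$, and $\Sigma\upharpoonright V[G\upharpoonright\gamma]$ is recovered from it there.
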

    \begin{proof}
        
        Let $S$ be an iteration tree (according to $\Sigma$) of limit length on $P$, with $S\in V[G\upharpoonright\gamma]$. Let $b$ be the branch through $S$ in $V[G]$ selected by $\rho[T]$. Suppose $H$ is another $Col(\omega,<\kappa)$-generic such that $H\upharpoonright\gamma = G\upharpoonright\gamma$. Then there is a branch $b'\in V[H]$ through $S$ chosen by $\rho[T]$. But we can build a third $Col(\omega,<\kappa)$ generic $H'$ such that $H'\upharpoonright\gamma = G\upharpoonright\gamma$ and  $G$, $H \in V[H']$. Then $b$ and $b'$ are both branches through $S$ according to $\rho[T]$, so $b = b'$. It follows that $b\in V[G\upharpoonright\gamma]$.
    \end{proof}

    \begin{remark}
        If $(P,\Sigma) = \rho[T]$ for some $T\in V$, then we could proceed as in the proof of Theorem \ref{theta_0 < kappa+} to obtain a $\square_\kappa$-sequence in $V$. Instead, we first have to modify $(P,\Sigma)$ to get a pair $(Q,\Lambda)$ which is sufficiently definable in $V$, so that the coherent sequence induced by $Lp^{\Lambda}$ is definable in $V$. To do this, we'll use a boolean-valued comparison argument similar to the one in \cite{cmi}.
    \end{remark}

    \begin{claim}
    \label{getting sigma from lambda}
        Suppose $U$ is a non-dropping iteration tree on $P$ according to $\Sigma$ with last model $Q$ and $\Lambda = \Sigma_{Q,U}$.\footnote{I.e. $\Lambda$ is the tail strategy.} Then $Lp^{\Sigma}(\R^*) \subseteq Lp^{\Lambda}(\R^*)$.
    \end{claim}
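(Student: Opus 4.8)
The plan is to show that every sound $\Sigma$-mouse $N$ over $\R^*$ projecting to $\R^*$ (i.e.\ with $\rho_\omega(N) = \R^*$) is an element of $Lp^{\Lambda}(\R^*)$; since $Lp^{\Sigma}(\R^*)$ is by definition the union of all such $N$, this suffices. The crucial point is that $\Sigma$ is uniformly recoverable from $\Lambda$. Because $\Sigma$ is fullness-preserving and has branch condensation, it is pullback consistent: writing $i = i^U \colon P \to Q$ for the iteration embedding of the non-dropping tree $U$, for every iteration tree $\T$ on $P$ according to $\Sigma$ the branch $\Sigma(\T)$ is exactly the unique branch of $\T$ whose $i$-copy $i\T$ on $Q$ is the branch chosen by $\Lambda$ for $i\T$ (here one uses that $\Lambda = \Sigma_{Q,U}$ is the tail of $\Sigma$ at $Q$, so that it acts on all trees on $Q$). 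Consequently, for any transitive set $X$ closed under the relevant operations and containing $P$, $Q$, $i$, the restriction $\Sigma \upharpoonright X$ is uniformly $\Sigma_1$-definable over $X$ from $\Lambda \upharpoonright X$.

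Granting this, fix such an $N$. First I would observe that $P$, $Q$, and $i$ are available over $\R^*$: $P$ is coded by a real, and $Q$ and $i$ are obtained from $P$ by a set iteration, hence are built by a fine-structural construction inside a proper initial segment of $Lp^{\Lambda}(\R^*)$ (in the intended application $U$ will be short enough that $Q$ is even coded by a real, in which case this is immediate). Now translate $N$ into a $\Lambda$-premouse $N'$ over $\R^*$ by replacing, level by level, each segment of $N$ carrying the $\Sigma$-predicate with the corresponding segment carrying the $\Lambda$-predicate, using the pullback translation above; the universe and the ordinal height are unchanged, and $N'$ is again sound and projects to $\R^*$. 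The heart of the argument is that $N'$ is $\Lambda$-iterable: given a putative iteration tree on $N'$ viewed as a $\Lambda$-premouse, the Copying Lemma together with pullback consistency converts it into an iteration tree on $N$ viewed as a $\Sigma$-premouse, and the $\Lambda$-branches selected along the former agree with the $\Sigma$-branches the translation demands along the latter; so iterability of $N$ transfers to $N'$. Therefore $N' \trianglelefteq Lp^{\Lambda}(\R^*)$, and since $N$ is recovered from $N'$ by the inverse (equally definable) translation, $N \in Lp^{\Lambda}(\R^*)$. This gives $Lp^{\Sigma}(\R^*) \subseteq Lp^{\Lambda}(\R^*)$.

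I expect the iterability transfer to be the main obstacle: one must verify carefully that no iteration of $N'$ qua $\Lambda$-premouse can fail to have the branch the translation needs, which is precisely where pullback consistency of $\Sigma$ enters in an essential way, together with the fact that $\Lambda$ --- being a tail of a fullness-preserving strategy with branch condensation --- is itself fullness-preserving with branch condensation, so that $\Lambda$-mice are full in the expected sense. A secondary point, already flagged above, is ensuring that $Q$ and $i$ lie below the relevant ordinal height of $Lp^{\Lambda}(\R^*)$; this is routine when $Q$ is coded by a real, and otherwise follows by constructing $Q$ and $i$ internally from $P$ and $\Lambda$.
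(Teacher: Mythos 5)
Your starting point agrees with the paper's: the key mechanism in both is that $\Sigma$ is recoverable from $\Lambda$ by copying a tree $\T$ on $P$ to a tree on $Q$ along the iteration map and invoking branch condensation (pullback consistency), and the paper also uses that this copying is available over $\R^*$ because the iteration from $P$ to $Q$ is countable in $V[G]$, hence coded by a real. Where your argument has a genuine gap is in how you cash this in: you propose to translate each sound $\Sigma$-mouse $N$ over $\R^*$ into a $\Lambda$-premouse $N'$ ``by replacing, level by level, each segment carrying the $\Sigma$-predicate with the corresponding segment carrying the $\Lambda$-predicate,'' with universe and height unchanged. This translation is not well defined as stated. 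A level of the $Lp^{\Lambda}$-hierarchy must record branches, according to that hierarchy's indexing scheme, for trees on $Q$ chosen by its own bookkeeping; these are in general not copies of trees on $P$, so the structure obtained by rewriting $N$'s predicate is not a $\Lambda$-premouse, and there is no reason its levels align with levels of $Lp^{\Lambda}(\R^*)$ or that the universe and ordinal height are preserved. The subsequent steps ($N'\trianglelefteq Lp^{\Lambda}(\R^*)$ and recovery of $N$ by an ``inverse translation'') therefore rest on an object that has not been constructed, and the copying/iterability-transfer argument cannot repair this because the mismatch is in the indexing of the hierarchy, not in iterability.

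The paper avoids any structural translation by arguing internally: using the lift-to-$Q$ recovery, $\Sigma\upharpoonright Lp^{\Lambda}(\R^*)$ is definable over $Lp^{\Lambda}(\R^*)$; hence the operation $a\mapsto M_0^{\Sigma,\#}(a)$ and a $Q$-structure-guided iteration strategy for $M_1^{\Sigma,\#}$ are definable there, and the containment $Lp^{\Sigma}(\R^*)\subseteq Lp^{\Lambda}(\R^*)$ then follows by inspecting the definition of $Lp^{\Lambda}$: the model $Lp^{\Lambda}(\R^*)$ can itself certify and build the $\Sigma$-mice over $\R^*$, so no level-by-level correspondence between the two hierarchies is needed. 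If you replace your translation step by this internal-definability argument (which your pullback-consistency observation already essentially provides), you arrive at the paper's proof.
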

    \begin{proof}
        \begin{subclaim}
        \label{strategy for M1Sigma}
            The iteration strategy for $M_1^{\Sigma,\#}$ (restricted to trees in $Lp^{\Lambda}(\R^*)$) is definable in $Lp^{\Lambda}(\R^*)$.
        \end{subclaim}
        \begin{proof}
            Suppose $T$ is a tree on $P$ according to $\Sigma$ of limit length and $T\in Lp^{\Lambda}(\R^*)$. Lift $T$ to a tree $T'$ on $Q$ (this can be done in $Lp^{\Lambda}(\R^*)$ since the iteration from $P$ to $Q$ is countable in $V[G]$ and thus coded by a real). Since $\Lambda$ is a tail strategy of $\Sigma$ and $\Sigma$ has branch condensation, $\Sigma(T)$ is definable from $\Lambda(T')$. Since $\Lambda(T')$ is definable in $Lp^{\Lambda}(\R^*)$ from $T$, $\Sigma(T)$ is as well.

            It follows that the operation $a \mapsto M_0^{\Sigma,\#}(a)$ (for transitive $a\in Lp^{\Lambda}(\R^*)$) is definable in $Lp^{\Lambda}(\R^*)$. $M_0^{\Sigma,\#}(a)$ is the minimal active, sound, $\Sigma$-premouse over $a$ projecting to $a$ such that any iterate of $M_0^{\Sigma,\#}$ by hitting its top extender is a $\Sigma$-premouse. This is definable in $Lp^{\Lambda}(\R^*)$ from $a$ since $\Sigma\upharpoonright Lp^{\Lambda}(\R^*)$ is definable in $Lp^{\Lambda}(\R^*)$.

            Then we have an iteration strategy for $M_1^{\Sigma,\#}$ in $Lp^{\Lambda}(\R^*)$ using $Q$-structures.
        \end{proof}

        The claim is immediate from Subclaim \ref{strategy for M1Sigma} and inspecting the definition of $Lp^{\Lambda}$.
    \end{proof}

    Let $G = H \times L$, where $H$ is $Col(\omega,\gamma)$-generic and $L$ is $Col(\omega,<\nu)$-generic for some $\nu$ such that $\gamma + \nu = \kappa$. Let $\tau$ be a $Col(\omega,\gamma)$-name for $T$ such that
    \begin{equation*}
        \emptyset \Vdash^V_{Col(\omega,\gamma)}``\Vdash^{V[\dot{G}\upharpoonright\gamma]}_{Col(\omega,<\kappa)} \rho[\tau] \text{ satisfies all properties of } (P,\Sigma) \text{ mentioned above.''}
    \end{equation*}

    Let $\tau_1$ and $\tau_2$ be names such that $\emptyset \Vdash ``\rho[\tau] \cap \R^*$ codes $(\tau_1,\tau_2)$.'' So $\tau_1$ is a name for a hod mouse with strategy $\tau_2$. For $q\in Col(\omega,\gamma)$, let $H_q = q \cup H\upharpoonright \omega\backslash domain(q)$. Note $V[H_q] = V[H]$ for any $q\in Col(\omega,\gamma)$. Let $P_q = \tau_1^{H_q}$ and $\Sigma_q = \tau_2^{H_q}$. Note $P_q \in V[H]$ and while $\Sigma_q\in M$, $\Sigma_q\upharpoonright V[H] \in V[H]$.

    \begin{claim}
    \label{can coiterate}
        If $q_1,q_2\in Col(\omega,\gamma)$, then there are countable stacks $\U_1$ and $\U_2$ in $V[H]$ such that (for $i\in \{1,2\}$) $U_i$ is a stack on $P_{q_i}$ according to $\Sigma_{q_i}$ and letting $Q_i$ be the last model of $U_i$ and $\Lambda_i = (\Sigma_{q_i})_{Q_i,U_i}$, $Q_1 = Q_2$ and $\Lambda_1 = \Lambda_2$.
    \end{claim}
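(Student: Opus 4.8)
The plan is to produce $\U_1,\U_2$ by carrying out the coiteration of $(P_{q_1},\Sigma_{q_1})$ and $(P_{q_2},\Sigma_{q_2})$, checking first inside $M$ (where $AD^+$ holds) that the coiteration is successful, and then observing that, because the two structures are countable and $\Sigma_{q_i}\upharpoonright V[H]\in V[H]$, the whole coiteration in fact takes place in $V[H]$.

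I would first work in $M$ (equivalently, in $V[G]$, since the relevant objects coincide). By the choice of the name $\tau$, each $(P_{q_i},\Sigma_{q_i})$ is a hod pair which is fullness-preserving, has branch condensation, and satisfies $Lp^{\Sigma_{q_i}}(\R^*)\cap P(\R^*)=P_{\Theta_{\alpha+1}}(\R^*)$; in particular the two hod pairs reach the same pointclass. By the comparison theorem for such hod pairs (\cite{hm&msc}), their coiteration is successful: there are countable non-dropping stacks $\U_1$ on $P_{q_1}$ via $\Sigma_{q_1}$ and $\U_2$ on $P_{q_2}$ via $\Sigma_{q_2}$ with a common last model $Q$ and agreeing tail strategies $(\Sigma_{q_1})_{Q,\U_1}=(\Sigma_{q_2})_{Q,\U_2}$. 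This already yields the structural conclusion $Q_1=Q_2$ and $\Lambda_1=\Lambda_2$.

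It remains to see $\U_1,\U_2\in V[H]$, which I would prove by induction on the length of the coiteration. Since $P_{q_1},P_{q_2}$ are countable in $V[H]$, every model and every tree arising in the coiteration is hereditarily countable. At a successor stage the next extenders to be used are determined by the least disagreement, which is absolute between $V[H]$ and $M$. At a limit stage of a component tree $\U_i\upharpoonright\xi$ the branch to be taken is $\Sigma_{q_i}(\U_i\upharpoonright\xi)$; by the induction hypothesis $\U_i\upharpoonright\xi\in V[H]$, and since $\Sigma_{q_i}\upharpoonright V[H]\in V[H]$ this branch is both computed in $V[H]$ and equal to the one chosen by the full strategy $\Sigma_{q_i}$. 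Hence the coiteration run in $V[H]$ coincides with the one from the previous paragraph, so it terminates at a countable stage, and $\U_1,\U_2\in V[H]$.

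The point demanding care is precisely this last step: one must guarantee that no tree occurring in the comparison escapes $V[H]$, which is exactly what the induction secures and which rests on $\Sigma_{q_i}\upharpoonright V[H]\in V[H]$; and one must check that the comparison genuinely yields a common model with matching tail strategies rather than merely comparable pairs, which uses that $(P_{q_1},\Sigma_{q_1})$ and $(P_{q_2},\Sigma_{q_2})$ reach the same pointclass (forced by $Lp^{\Sigma_{q_i}}(\R^*)\cap P(\R^*)=P_{\Theta_{\alpha+1}}(\R^*)$). The boolean-valued apparatus alluded to in the preceding remark, needed later to make the eventual pair $(Q,\Lambda)$ definable in $V$ (cf.\ \cite{cmi}), does not enter at this stage, since $q_1$ and $q_2$ are fixed here.
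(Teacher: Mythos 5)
There is a genuine gap in the step that moves the comparison from $M$ down to $V[H]$. Your induction treats the coiteration of $(P_{q_1},\Sigma_{q_1})$ and $(P_{q_2},\Sigma_{q_2})$ as an ordinary least-extender-disagreement comparison whose only non-absolute ingredient is the choice of branches at limit stages. But these are hod pairs, and comparison of hod pairs is not a local, stage-by-stage process of resolving least extender disagreements: one must also compare the strategies themselves, and the proof of the comparison theorem (Theorem 2.47 of \cite{hm&msc}) proceeds via $\Gamma$-fullness preservation and branch condensation rather than by a recursion that $V[H]$ could simply re-run. Consequently the assertions that ``the next extenders are determined by the least disagreement, which is absolute'' and that ``the coiteration run in $V[H]$ coincides with the one in $M$'' are unjustified; locating strategy disagreements and verifying the termination argument are exactly the points where the comparison could a priori escape $V[H]$, and your induction does not address them.

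The paper avoids this issue by never performing the comparison in $M$ at all: it observes that $(P_{q_1},\Sigma_{q_1}\upharpoonright V[H])$ and $(P_{q_2},\Sigma_{q_2}\upharpoonright V[H])$ are of the same kind \emph{in} $V[H]$ and applies Theorem 2.47 inside $V[H]$, so countability of the stacks and their membership in $V[H]$ come for free. Two further points in the paper's proof are worth noting against your sketch. First, the theorem only yields $Q_1\trianglelefteq Q_2$ together with agreement of the tail strategies restricted to $V[H]$; equality $Q_1=Q_2$ is then extracted from Claim \ref{getting sigma from lambda}, which gives $\Theta(\Lambda_1)=\Theta=\Theta(\Lambda_2)$ (you gesture at this via ``same pointclass,'' which is the right idea). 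Second, because the comparison is carried out in $V[H]$, one only gets $\Lambda_1\upharpoonright V[H]=\Lambda_2\upharpoonright V[H]$, and upgrading this to $\Lambda_1=\Lambda_2$ as strategies in $M$ requires the Suslin representations $\Lambda_i=(\rho[T_i])^M$ with $T_i\in V[H]$ and an absoluteness argument; your version does not need this step only because it performs the comparison in $M$, which is precisely where the unproved transfer to $V[H]$ arises.
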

    \begin{proof}
        Since $\Sigma_{q_1}$ and $\Sigma_{q_2}$ are both $\Gamma$-fullness preserving for $\Gamma = D(V,\kappa) \cap P(\R)$ and have branch condensation, $(P_{q_1},\Sigma_{q_1})$ and $(P_{q_2},\Sigma_{q_2})$ are of the same kind.\footnote{Lemma 3.32 of \cite{hm&msc} proves this in the (harder) case that the order type of the Woodin cardinals in $P_{q_1}$ and $P_{q_2}$ are limit ordinals.} Then $V[H]$ satisfies $(P_{q_1},\Sigma_{q_1}\upharpoonright V[H])$ and $(P_2,\Sigma_{q_2}\upharpoonright V[H])$ are of the same kind, so we may apply Theorem 2.47 of \cite{hm&msc} in $V[H]$ to $(P_{q_1},\Sigma_{q_1}\upharpoonright V[H])$ and $(P_2,\Sigma_{q_2}\upharpoonright V[H])$. Let $U_1$ and $U_2$ be the stacks on $P_{q_1}$ and $P_{q_2}$ obtained from applying this theorem, and define $(Q_i,\Lambda_i)$ from $U_i$ as in the statement of the claim. The theorem gives (for $i\in\{1,2\}$) $U_i$ is countable, $U_i\in V[H]$, and, without loss of generality, $Q_1 \trianglelefteq Q_2$ and $\Lambda_1 \upharpoonright V[H] = (\Lambda_2)_{Q_1} \upharpoonright V[H]$.\footnote{I.e. $\Lambda_1$ is the restriction of $\Lambda_2$ to trees on $Q_1$.} By Claim \ref{getting sigma from lambda}, $\Theta(\Lambda_1) = \Theta = \Theta(\Lambda_2)$, and therefore $Q_1 = Q_2$ and $\Lambda_1 \upharpoonright V[H] = \Lambda_2 \upharpoonright V[H]$. It remains to show $\Lambda_1 = \Lambda_2$. Since $\Lambda_i$ is a tail strategy of $\Sigma_{q_i}$, there is a tree $T_i\in V[H]$ such that $\Lambda_i = (\rho[T_i])^{M}$. We know $(\rho[T_1])^{V[H]} = (\rho[T_2])^{V[H]}$. Then standard absoluteness arguments imply $(\rho[T_1])^{M} = (\rho[T_2])^{M}$.
    \end{proof}

    By Claim \ref{can coiterate}, we may define a hod pair $(Q,\Lambda)$ to be the direct limit of all countable iterates in $V[H]$ of every $(P_q,\Sigma_q)$ for $q\in Col(\omega,\gamma)$. It follows from Claim \ref{getting sigma from lambda} that $L(Lp^{\Lambda}(\R^*)) = M$. In particular, $Lp^{\Lambda}(\R^*)$ has height $\kappa^+$.

    We have symmetric terms for $Q$ and $\Lambda$, so $Q\in V$ and $\Lambda \upharpoonright V \in V$. Let $\pi$ be a $Col(\omega,<\kappa)$-name (in $V$) for $\R^*$ such that $\pi\subset H_\kappa^V$. Then $Lp^{\Lambda}(\pi\cup H^V_\kappa)\in V$.

    Let $S$ be the result of the $S$-construction\footnote{\cite{sihmor} develops $S$-constructions for ``$\Theta$-g-organized hod mice'' --- the $S$ construction for $Lp^\Sigma$ is defined similarly.} in $Lp^{\Lambda}(\R^*)$ over $\pi\cup H^V_\kappa$. Standard properties of the $S$-construction imply $S \trianglelefteq Lp^{\Lambda}(\pi\cup H^V_\kappa)$. In particular, $Lp^{\Lambda}(\pi\cup H_\kappa^V)$ has height $(\kappa^+)^V$.

    From here the argument is the same as in the proof of Theorem \ref{theta_0 < kappa+}.
\end{proof}

We can generalize Theorem \ref{theta < kappa+} further by applying a new comparison theorem for least branch hod pairs (lbr hod pairs)\footnote{See \cite{lbrbook} for a definition of least branch hod pairs.} due to Sargsyan and Steel:

\begin{theorem}[Sargsyan, Steel]
\label{lbr comparison}
    Assume $AD^+$. Let $(P,\Sigma)$ and $(Q,\Lambda)$ be lbr hod pairs with scope $HC$. Let $x_1$ and $x_2$ be reals coding $P$ and $Q$, respectively. Let $\rho[S_1]$ and $\rho[S_2]$ be Suslin representations of $\Sigma$ and $\Lambda$, respectively. Then there are trees $\T$ and $\U$ on $(P,\Sigma)$ and $(Q,\Lambda)$, respectively, with common last pair $(R,\Omega)$ such that at least one of $\T$ or $\U$ does not drop on its main branch and $\T$, $\U$, and $R$ are countable in $L[x_1,x_2,S_1,S_2]$.
\end{theorem}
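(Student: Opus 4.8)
The plan is to carry out the comparison inside the fine-structural model $N = L[x_1,x_2,S_1,S_2]$ and then transfer the result back to $V$. Since $x_1$ and $x_2$ are reals, $P$ and $Q$ are hereditarily countable in $N$; and since $S_1,S_2 \in N$, the sets $\rho[S_1]$ and $\rho[S_2]$ as computed in $N$ restrict $\Sigma$ and $\Lambda$ to iteration trees belonging to $N$. The model $N$ satisfies $ZFC$, so the strategy is to reduce the statement to the $ZFC$ comparison process for mouse pairs of \cite{lbrbook}, applied in $N$ to $(P,\rho[S_1]^N)$ and $(Q,\rho[S_2]^N)$, and then use that $\rho[S_i]^N$ agrees with the genuine strategy on the trees it acts on.

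First I would verify that $(P,\rho[S_1]^N)$ and $(Q,\rho[S_2]^N)$ are again lbr hod pairs \emph{in} $N$: that the restricted strategies are total on countable trees of $N$, normalize well, and have strong hull condensation. Totality — together with the fact that the branch selected is the correct one, namely $\Sigma(\T)$ — follows from the $AD^+$-absoluteness of the tree representations: here one uses that $(P,\Sigma)$ and $(Q,\Lambda)$ have scope $HC$, so $\Sigma$ and $\Lambda$ are Suslin-co-Suslin, and that the relevant statements (``$b$ is the unique cofinal wellfounded branch of $\T$ selected by $S_1$'') are absolute between $V$ and the inner model $N$. Strong hull condensation and the normalization properties descend to $N$ because they assert something about countable hull embeddings $\Phi:\U\to\T$ between countable trees, objects which $N$ computes correctly, so these properties reflect from $V$ — where they hold because $(P,\Sigma)$ and $(Q,\Lambda)$ are lbr hod pairs — down to $N$. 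The point of passing to $L[x_1,x_2,S_1,S_2]$ specifically, rather than to $L[x_1,x_2]$, is exactly that $L[x_1,x_2]$ need not contain the branches required to run the comparison, whereas $S_1$ and $S_2$ supply them.

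Next I would apply the comparison theorem for lbr hod pairs of \cite{lbrbook} inside $N$ to the pairs $(P,\rho[S_1]^N)$ and $(Q,\rho[S_2]^N)$. This produces, in $N$, iteration trees $\T$ on $(P,\rho[S_1]^N)$ and $\U$ on $(Q,\rho[S_2]^N)$ reaching a common last pair $(R,\Omega)$, with at least one of $\T,\U$ not dropping on its main branch and $\Omega$ the common tail strategy determined by the comparison. Because the inputs are hereditarily countable in $N$, and coiterations of hereditarily countable mouse pairs terminate at a countable stage — a $ZFC$ fact, hence valid in $N$ — we get $\T,\U,R \in HC^N$, i.e., countable in $N$. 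Finally, since $\rho[S_1]^N$ and $\rho[S_2]^N$ agree with $\Sigma$ and $\Lambda$ on the countable trees lying in $N$, the trees $\T$ and $\U$ are in fact by $\Sigma$ and $\Lambda$, and $\Omega$ is a tail of $\Sigma$ past $\T$ and of $\Lambda$ past $\U$; this yields the theorem.

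I expect the main obstacle to be the transfer step: confirming that the restricted strategies $\rho[S_1]^N$ and $\rho[S_2]^N$ really do satisfy every hypothesis the comparison process of \cite{lbrbook} requires — in particular strong hull condensation, good behavior under full normalization, and totality/iterability sufficient for the process to run to completion in $N$ rather than stalling at an ill-founded model. Once this is established, the comparison itself, the existence and agreement of the common tail strategy $\Omega$, the ``at least one side does not drop'' clause, and the countability bound are all consequences of the $ZFC$ theory applied inside $N$. A secondary point to watch is that the two pairs must be of matching type for the comparison theorem to apply verbatim; this is built into the hypothesis that $(P,\Sigma)$ and $(Q,\Lambda)$ are both lbr hod pairs with scope $HC$, but one should check that this property descends to $N$ along with the other structural ones.
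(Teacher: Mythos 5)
This theorem is not proved in the paper at all: it is quoted as a result of Sargsyan and Steel, with the proof deferred to \cite{lbr_comparison}, so there is no internal argument to measure your proposal against. Judged on its own terms, your proposal has a genuine gap at its central step. You propose to ``apply the comparison theorem for lbr hod pairs of \cite{lbrbook} inside $N=L[x_1,x_2,S_1,S_2]$,'' treating comparison as a $ZFC$ fact about pairs that are lbr hod pairs in $N$. But the comparison theorem for mouse pairs in \cite{lbrbook} is an $AD^+$ theorem, not a $ZFC$ one, and its proof does not compare the two pairs against each other by least disagreement --- that method breaks down for strategy premice, since the strategy predicates can disagree without any extender disagreement. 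Instead, the proof iterates both pairs into the levels of a fully backgrounded construction carried out in a coarse $\Gamma$-Woodin universe in which both strategies are Suslin captured. None of that apparatus is available inside $N$: $N$ is a $ZFC$ model which in general has no Woodin cardinals and certainly does not satisfy $AD^+$, so there is no theorem of the form ``in $ZFC$, any two internally-defined lbr hod pairs with strong hull condensation that normalize well can be successfully compared'' for you to invoke there.

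The parts of your argument that do work --- that $\rho[S_1]^N$ and $\rho[S_2]^N$ are total and agree with $\Sigma$ and $\Lambda$ on trees in $N$, and that hull condensation and normalization properties reflect to $N$ --- are the routine absoluteness facts; the actual content of the Sargsyan--Steel theorem is precisely the step you outsource, namely that a successful comparison exists at all and that its witnesses $\T$, $\U$, $R$ are bounded below $\omega_1^{L[x_1,x_2,S_1,S_2]}$. That countability bound is not a soft consequence of ``coiterations of countable pairs terminate countably'' run inside $N$; it is the new boundedness/definability refinement proved in \cite{lbr_comparison}, using the $AD^+$ comparison machinery together with the Suslin representations $S_1,S_2$. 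You correctly flag the transfer step as the main obstacle, but it is not merely a matter of checking that hypotheses descend to $N$: the theorem you want to apply inside $N$ does not exist in that form, so the proposal as written does not constitute a proof.
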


See \cite{lbr_comparison} for a proof of Theorem \ref{lbr comparison}. We also need a result of Steel that an lbr hod pair is Suslin-co-Suslin:

\begin{theorem}[Steel]
\label{lbr strategy suslin-co-suslin}
    Assume $AD^+$. If $(P,\Sigma)$ is an lbr hod pair, then $\Sigma$ is Suslin-co-Suslin.
\end{theorem}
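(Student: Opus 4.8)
The plan is to exhibit a Suslin representation of $\mathrm{Code}(\Sigma)$ built from the direct limit of the countable $\Sigma$-iterates of $P$, and then to read off a Suslin representation of its complement from the uniqueness of the $\Sigma$-branch. I would run the argument by induction on the position of $(P,\Sigma)$ in the mouse order, so that the strategies of all proper initial segments of $P$ and of the hod pairs Wadge-below $\Sigma$ may be assumed Suslin-co-Suslin; these are precisely the strategies that appear as $Q$-structures below.

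First I would form the directed system $\mathcal{F}(P,\Sigma)$ of all countable non-dropping $\Sigma$-iterates of $P$ with the iteration maps, and let $\mathcal{M}_\infty$ be its direct limit, with maps $\pi_{Q,\infty}\colon Q\to\mathcal{M}_\infty$ for $Q\in\mathcal{F}(P,\Sigma)$. Under $AD^+$ this system is $\mathrm{OD}$ from a real coding $(P,\Sigma)$, every thread stabilizes by boundedness, and the standard stabilization argument shows $\mathcal{M}_\infty$ is well-founded and, after passing to a cofinal subsystem, is coded by a set of reals no more complex than $\mathrm{Code}(\Sigma)$. The key structural input is branch condensation together with fullness-preservation: for a limit-length iteration tree $\mathcal{T}\in HC$ on a $\Sigma$-iterate $Q$ and a cofinal branch $b$, one has $b=\Sigma(\mathcal{T})$ exactly when either $\mathcal{T}$ is short and $b$ carries the correct $Q$-structure $Q(b,\mathcal{T})$ --- an iterable premouse recognized by an $Lp$-type operator built from strategies already known, by the induction hypothesis, to be Suslin-co-Suslin --- or $\mathcal{T}$ is maximal and there is an embedding $\sigma\colon\mathcal{M}^{\mathcal{T}}_b\to\mathcal{M}_\infty$ with $\sigma\circ i^{\mathcal{T}}_b=\pi_{Q,\infty}$. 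In either case, membership of $(\mathcal{T},b)$ in $\mathrm{Code}(\Sigma)$ is witnessed by a real coding such a $Q$-structure or such a $\sigma$, so $\mathrm{Code}(\Sigma)$ is the projection of a tree that searches, alongside a putative branch, for this witness; hence $\mathrm{Code}(\Sigma)$ is Suslin.

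For the complement I would use that the $\Sigma$-branch is unique: $(\mathcal{T},b)\notin\mathrm{Code}(\Sigma)$ iff $b$ is ill-founded (an analytic, hence Suslin, condition) or there is some cofinal branch $c\neq b$ with $(\mathcal{T},c)\in\mathrm{Code}(\Sigma)$ (the projection of the Suslin set produced in the previous step, hence Suslin). Since a union of two Suslin sets is Suslin, $\mathrm{Code}(\Sigma)$ is Suslin-co-Suslin, as desired. I expect the main obstacle to be the $Q$-structure bookkeeping in the short-tree case --- cleanly separating short from maximal trees in the least-branch hierarchy, and verifying that the relevant $Q$-structures are captured by operators whose graphs are already Suslin-co-Suslin --- together with the well-foundedness and set-sized representability of $\mathcal{M}_\infty$; both are handled by now-standard hod-pair technology under $AD^+$, but they are the substantive steps.
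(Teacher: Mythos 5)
Your construction hinges on the ``key structural input'' that $\Sigma$ has branch condensation and is fullness preserving, so that a cofinal branch $b$ of a maximal tree is the $\Sigma$-branch exactly when it carries the right $Q$-structure or embeds into the direct limit $\mathcal{M}_\infty$ compatibly with the iteration maps. That is the argument for Sargsyan-style hod pairs --- essentially Theorem 5.9 of \cite{hm&msc}, which this paper already uses in the proof of Theorem \ref{theta < kappa+} --- but it is not available here: the hypothesis of the present theorem is only that $(P,\Sigma)$ is a \emph{least branch} hod pair under $AD^+$. Lbr strategies come with strong hull condensation and normalizing well, not branch condensation, and no fullness preservation relative to a pointclass is part of the definition; the equivalence ``$b=\Sigma(\mathcal{T})$ for maximal $\mathcal{T}$ iff there is $\sigma\colon \mathcal{M}^{\mathcal{T}}_b\to\mathcal{M}_\infty$ with $\sigma\circ i^{\mathcal{T}}_b=\pi_{Q,\infty}$'' is precisely what is not known in the lbr setting, and it cannot simply be imported as an induction hypothesis along the mouse order. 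There are secondary problems as well: the short/maximal dichotomy and the $Q$-structure bookkeeping you invoke presuppose a pointclass-relative fullness notion, the definability and set-sized coding of $\mathcal{M}_\infty$ at the complexity of $Code(\Sigma)$ is part of what must be proved, and lbr strategies act on finite stacks, so the co-Suslin side needs to handle stacks not by $\Sigma$, not just wrong branches of on-strategy trees. As written, the proposal proves (a version of) the branch-condensation theorem, not the stated theorem about lbr hod pairs; Theorem \ref{lbr thm} was introduced exactly to cover the case where branch condensation is unavailable.

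The paper takes a different route that avoids constructing a Suslin representation directly: it derives the theorem from Theorem 3.4 of \cite{mouse_pairs_suslin}. Suppose some lbr strategy is not Suslin-co-Suslin; let $\mathbf{\Gamma}$ be the pointclass of Suslin sets, $\mathbf{\Delta}=\mathbf{\Gamma}\cap\mathbf{\Gamma}^c$, and $\delta=o(\mathbf{\Delta})$ the largest Suslin cardinal, and take $(P,\Sigma)$ minimal in the mouse order with $\Sigma\notin\mathbf{\Delta}$. Then $(P,\Sigma)$ is a pointclass generator for $\mathbf{\Delta}$; since $\mathbf{\Gamma}$ is inductive-like and $\delta$ is regular, case 3(c) of Steel's theorem produces a Suslin cardinal above $\delta$, a contradiction. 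If you wanted a self-contained argument along your lines, you would in effect have to reprove the substance of \cite{mouse_pairs_suslin}, whose induction through the Suslin-cardinal hierarchy is substantially harder than the direct-limit argument you sketch.
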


Theorem \ref{lbr strategy suslin-co-suslin} is a corollary of Theorem 3.4 of \cite{mouse_pairs_suslin}. Suppose Theorem \ref{lbr strategy suslin-co-suslin} fails. Let $\mathbf{\Gamma}$ be the pointclass of all Suslin sets and $\mathbf{\Delta} = \mathbf{\Gamma} \cap \mathbf{\Gamma}^c$. Letting $\delta = o(\mathbf{\Delta})$, $\delta$ is the largest Suslin cardinal. Let $(P,\Sigma)$ be the minimal lbr hod pair (in the mouse order) such that $\Sigma\notin \mathbf{\Delta}$. Then $(P,\Sigma)$ is a pointclass generator for $\bf{\Delta}$ and we can apply Theorem 3.4. $\mathbf{\Gamma}$ is inductive-like and $\delta$ must be a regular cardinal, so we are in case 3(c) of Theorem 3.4. But this gives a Suslin cardinal larger than $\delta$, a contradiction.

We can now prove a version of Theorem \ref{theta < kappa+} using Theorems \ref{lbr comparison} and \ref{lbr strategy suslin-co-suslin} in place of the analogous results from \cite{hm&msc} for hod pairs below $AD_\R + \Theta$ is regular.

\begin{theorem}
\label{lbr thm}
    Suppose $V\models \neg\square_\kappa$, $\kappa$ is a limit of Woodin cardinals, and $D(V,\kappa)\models ``\Theta_{\alpha+1}$ exists'' $ + $``there is a least branch hod pair $(P,\Sigma)$ such that $P_{\Theta_{\alpha+1}}(\R^*) = Lp^{\Sigma}(\R^*) \cap P(\R^*)$. Then $\Theta^{D(V,\kappa)}_{\alpha+1} < \kappa^+$.
\end{theorem}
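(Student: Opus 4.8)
The plan is to mirror the proof of Theorem \ref{theta < kappa+}, substituting the new Sargsyan--Steel comparison theorem (Theorem \ref{lbr comparison}) and Steel's Suslin-co-Suslin theorem (Theorem \ref{lbr strategy suslin-co-suslin}) for the hod-pair comparison and representation results from \cite{hm&msc}. Let $M = D(V,\kappa)$ and suppose toward a contradiction $\Theta_{\alpha+1}^M = \kappa^+$. Then $M \models V = L(P_{\Theta_{\alpha+1}}(\R^*))$, and with $(P,\Sigma)$ the lbr hod pair from the hypothesis we get $M = L(Lp^\Sigma(\R^*))$, so $Lp^\Sigma(\R^*)$ has height $\kappa^+$. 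Fix the $Col(\omega,<\kappa)$-generic $G$ used to build $M$. By Theorem \ref{lbr strategy suslin-co-suslin}, $\Sigma$ is Suslin-co-Suslin in $M$; since the Suslin-co-Suslin sets of $M$ and $olD(V,\kappa)$ agree (Remark \ref{same suslin-co-suslin sets}), there is $\gamma < \kappa$ and $T \in V[G\upharpoonright\gamma]$ with $(\rho[T])^M$ coding $(P,\Sigma)$. The same boolean-valued argument as in the Claim inside the proof of Theorem \ref{theta < kappa+} shows $\Sigma\upharpoonright V[G\upharpoonright\gamma] \in V[G\upharpoonright\gamma]$, and the analogue of Claim \ref{getting sigma from lambda} holds for lbr hod pairs as well: for a non-dropping iterate $(Q,\Lambda)$ of $(P,\Sigma)$ we have $Lp^\Sigma(\R^*) \subseteq Lp^\Lambda(\R^*)$, since an iteration strategy for the relevant $M_1^{\Sigma,\#}$-type object using $Q$-structures is definable in $Lp^\Lambda(\R^*)$ once $\Sigma\upharpoonright Lp^\Lambda(\R^*)$ is, using branch condensation built into the lbr hod pair notion.

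Next I would run the boolean-valued comparison. Factor $G = H \times L$ with $H$ being $Col(\omega,\gamma)$-generic. Let $\tau_1,\tau_2$ be $Col(\omega,\gamma)$-names so that $\rho[\tau]\cap\R^*$ codes $(\tau_1,\tau_2)$, and for $q\in Col(\omega,\gamma)$ set $P_q = \tau_1^{H_q}$, $\Sigma_q = \tau_2^{H_q}$, where $H_q$ is $H$ rewritten below the condition $q$; note $P_q\in V[H]$ and $\Sigma_q\upharpoonright V[H]\in V[H]$. For any $q_1,q_2$, apply Theorem \ref{lbr comparison} inside $V[H]$ (the reals $x_1,x_2$ coding $P_{q_1},P_{q_2}$ and the Suslin representations $S_1,S_2$ of $\Sigma_{q_1},\Sigma_{q_2}$ all lie in $V[H]$) to obtain countable trees $\T_i\in V[H]$ on $(P_{q_i},\Sigma_{q_i})$ with a common last pair, at least one side non-dropping; combining this with the lbr analogue of Claim \ref{getting sigma from lambda} (which forces $\Theta(\Lambda_1) = \Theta = \Theta(\Lambda_2)$ so neither side can drop and the two last pairs coincide) and standard $\rho[\,\cdot\,]$-absoluteness between $V[H]$ and $M$ to upgrade equality of the $V[H]$-restrictions to equality of the full strategies, yields a pair $(Q,\Lambda)$ — the direct limit of all countable $V[H]$-iterates of all $(P_q,\Sigma_q)$ — with $L(Lp^\Lambda(\R^*)) = M$ and symmetric terms for $Q$ and $\Lambda$, so $Q\in V$ and $\Lambda\upharpoonright V\in V$.

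Finally, fix a $Col(\omega,<\kappa)$-name $\pi\subset H^V_\kappa$ for $\R^*$, so $Lp^\Lambda(\pi\cup H^V_\kappa)\in V$, and run the $S$-construction in $Lp^\Lambda(\R^*)$ over $\pi\cup H^V_\kappa$; its output $S$ lies in $V$ and satisfies $S\trianglelefteq Lp^\Lambda(\pi\cup H^V_\kappa)$, so $Lp^\Lambda(\pi\cup H^V_\kappa)$ has height $(\kappa^+)^V$. Since none of the extenders on the sequence of $S$ has critical point below $\kappa$, the argument of \cite{coherent} produces in $V$ a club $C\subseteq\kappa^+$ and an almost coherent sequence $\vec D = \langle D_\alpha : \alpha\in C\rangle$, which by the transformation in the proof of Theorem \ref{result in old derived model} (Lemma \ref{square' from old derived model} plus Remarks \ref{square' on club implies square'} and \ref{square' implies square}) yields a $\square_\kappa$-sequence in $V$, contradicting $V\models\neg\square_\kappa$.

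I expect the main obstacle to be verifying that the lbr-hod-pair machinery genuinely substitutes for the hod-pair-below-$AD_\R$ machinery at every point where the proof of Theorem \ref{theta < kappa+} invokes \cite{hm&msc}: specifically, that the $S$-construction for $Lp^\Lambda$ is available and has the standard ``$S\trianglelefteq Lp^\Lambda$'' behavior in the lbr setting, and that the branch-condensation-style argument in the analogue of Claim \ref{getting sigma from lambda} goes through with the least-branch notion of hod pair — i.e. that a tail strategy $\Lambda$ of $\Sigma$ still definably recovers $\Sigma$ so that $L(Lp^\Lambda(\R^*)) = M$. The comparison step itself is comparatively clean once Theorem \ref{lbr comparison} is quoted, since it already packages countability of the trees inside the relevant $L[x_1,x_2,S_1,S_2]\subseteq V[H]$.
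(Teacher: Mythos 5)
Your overall plan is exactly the paper's: reduce to the proof of Theorem \ref{theta < kappa+}, with Theorem \ref{lbr comparison} (and Theorem \ref{lbr strategy suslin-co-suslin}) replacing the comparison and Suslin-representation results quoted from \cite{hm&msc}. However, there is a genuine gap at the analogue of Claim \ref{can coiterate}. The paper does not run the argument with an arbitrary $(P,\Sigma)$ as in the hypothesis; it first replaces it by a pair that is \emph{minimal in the mouse pair order} among lbr hod pairs with $Lp^\Sigma(\R^*)$ generating $D(V,\kappa)$ (citing Corollary 9.3.7 of \cite{lbrbook}), precisely so that any two pairs with this property coiterate to the same pair with no drop on either side. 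This matters because Theorem \ref{lbr comparison} only guarantees that \emph{at least one} of the two trees is non-dropping on its main branch. Your parenthetical fix --- that the lbr analogue of Claim \ref{getting sigma from lambda} forces $\Theta(\Lambda_1)=\Theta=\Theta(\Lambda_2)$ and hence ``neither side can drop'' --- does not work: that claim is only available along non-dropping branches, so on the possibly-dropping side you have no control over the tail, and there is nothing contradictory about the common last pair generating all of $P(\R^*)$ while one side dropped. (In Claim \ref{can coiterate} the $\Theta$ computation served to upgrade $Q_1\trianglelefteq Q_2$ to $Q_1=Q_2$; here the last pair is already common by Theorem \ref{lbr comparison}, and the live issue is the drop, which that computation does not address.)

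The drop is not a cosmetic issue: if comparisons among the $(P_q,\Sigma_q)$ for $q\in Col(\omega,\gamma)$ (and their iterates) can drop, then the system of countable iterates over all conditions $q$ need not be directed by iteration maps, so the direct limit $(Q,\Lambda)$ is not well defined; and the homogeneity/symmetric-term argument giving $Q\in V$ and $\Lambda\upharpoonright V\in V$ requires taking all $q$, so you cannot simply fix one condition. With the paper's minimality requirement this is repaired: a drop on one side of a comparison would place the other pair strictly below it in the mouse pair order while still satisfying the defining property, contradicting minimality; hence all the relevant comparisons are non-dropping on both sides, the system is directed, and the symmetric-term argument goes through. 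You should add this choice of a minimal pair (and note that the hypothesis only asserts existence of \emph{some} suitable pair, so the reduction to a minimal one needs the cited result). The remainder of your sketch --- the lbr analogue of Claim \ref{getting sigma from lambda}, the $S$-construction over $\pi\cup H^V_\kappa$, and the transformation into a $\square_\kappa$-sequence contradicting $\neg\square_\kappa$ --- matches the paper's intended argument.
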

\begin{proof}[Proof Sketch]
    Suppose not. Pick an lbr hod pair $(P,\Sigma)$ in $D(V,\kappa)$ which is minimal in the mouse pair order such that $Lp^\Sigma(\R^*) = D(V,\kappa)$ (see Corollary 9.3.7 of \cite{lbrbook}). We pick $(P,\Sigma)$ minimal in this sense so that if $(P',\Sigma')$ is any other lbr hod pair satisfying the same property, then $(P,\Sigma)$ and $(P',\Sigma')$ coiterate to the same lbr hod pair. The rest of the proof is identical to that of Theorem \ref{theta < kappa+}, except for Claim \ref{can coiterate}, which follows from Theorem \ref{lbr comparison} and the remarks above.
\end{proof}

What happens if $\Theta^{D(V,\kappa)}$ is a limit level of the Solovay hierarchy? In this case, Remarks \ref{same suslin-co-suslin sets} and \ref{AD_R equivalences} imply $D(V,\kappa) = olD(V,\kappa)$. So as a corollary to Theorem \ref{result in old derived model}, if $\square_\kappa$ fails, then $\Theta^{D(V,\kappa)} < \kappa^+$. This was already well known by a more elementary argument:

\begin{theorem}
\label{not adr}
    Suppose $\kappa$ is a limit of Woodin cardinals and $D(V,\kappa)\models AD_\R$. Then $\Theta^{D(V,\kappa)}<\kappa^+$.
\end{theorem}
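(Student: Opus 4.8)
The plan is to reduce this to a counting argument that does not require any fine structure, using only the fact that under $AD_\R$ every set of reals is Suslin–co-Suslin (Remark \ref{AD_R equivalences}). Fix $G$ which is $Col(\omega,<\kappa)$-generic over $V$, so $|\R^*_G|^{V[G]} = \kappa$ and $G$ preserves cardinals $\geq \kappa$; it therefore suffices to show $\Theta^{D(V,\kappa)} < (\kappa^+)^{V[G]}$. Since $D(V,\kappa)\models AD_\R$, Remark \ref{same suslin-co-suslin sets} and the remark following Remark \ref{AD_R equivalences} give $D(V,\kappa) = olD(V,\kappa) = L(Hom^*_G,\R^*_G)$. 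So every $A\subseteq\R^*_G$ in the model is of the form $\rho[T]\cap\R^*_G$ for a tree $T\in V[G\upharpoonright\gamma]$ which is $<\kappa$-absolutely complemented there, for some $\gamma < \kappa$.

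The key step is to bound $\Theta$. Every surjection $f:\R^*_G\to\alpha$ witnessing $\alpha<\Theta^{D(V,\kappa)}$ is $OD(A,x)$ in the derived model for some $A\in Hom^*_G$ and $x\in\R^*_G$; by the above $A$ is coded by a $<\kappa$-absolutely complemented tree $T\in V[G\upharpoonright\gamma]$, and $x\in V[G\upharpoonright\gamma']$, for some $\gamma,\gamma'<\kappa$. Thus $f$ (equivalently, its range-ordinal $\alpha$) is determined by: a finite tuple of ordinal parameters, an element of $H^V_\kappa$ coding a name for such a tree, an element of $H^V_\kappa$ coding a name for $x$, and a formula. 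Since $\kappa$ is a strong limit in $V$ (being a limit of Woodins, hence inaccessibles), $|H^V_\kappa| = \kappa$, so there are only $\kappa$-many such data modulo the finitely many ordinal parameters. More carefully: I would show that the ordinals $\alpha$ arising this way, for a fixed finite ordinal parameter tuple $\vec{s}$, a fixed name-code in $H^V_\kappa$, and a fixed formula, form a set whose supremum is $<(\kappa^+)^{V[G]}$ — indeed each such $\alpha$ is definable in $V[G]$ from $\vec{s}$ together with fixed elements of $H^V_\kappa$ and $G$, and $\Theta^{D(V,\kappa)}$ of a fixed such family is a cardinal (or bounded) below $\kappa^+$ by a Replacement/size argument in $V[G]$, using that $|\R^*_G|=\kappa$. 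Taking the supremum over the $\kappa$-many possible name-codes and formulas (the ordinal parameters being absorbed since $\Theta$ is an ordinal definable uniformly), one gets $\Theta^{D(V,\kappa)}$ is a union of $\kappa$-many ordinals each $<\kappa^+$, hence $<\kappa^+$.

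The one point requiring care — and the main obstacle — is handling the ordinal parameters in the $OD(A,x)$ definition: a priori these range over all of $\mathrm{ON}$, not just below $\kappa^+$. The resolution is that in $L(Hom^*_G,\R^*_G)$ one can, by the usual Solovay-style reflection, replace arbitrary ordinal parameters with parameters below $\Theta$ itself when defining surjections of $\R^*_G$ onto ordinals below $\Theta$ (this is exactly why $\Theta(A)$ only quotes $OD(A,x)$ with $x\in\R$); so it suffices to bound the contribution of each Suslin-co-Suslin $A$ separately, and for a single such $A$, coded in $V[G\upharpoonright\gamma]$, the sup of ordinals $\alpha$ for which there is an $OD(A,x)$-surjection $\R^*_G\to\alpha$ is just $\Theta(A)^{D(V,\kappa)}$, which is a cardinal in $V[G]$ of size at most $|\R^*_G| = \kappa$, hence $\leq\kappa < \kappa^+$. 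Then $\Theta^{D(V,\kappa)} = \sup_A \Theta(A)$ over the $\leq\kappa$-many $A$'s, giving a supremum of $\leq\kappa$-many ordinals each $\leq\kappa$, so $\Theta^{D(V,\kappa)}\leq\kappa^{+}$ and in fact $<\kappa^+$ since cofinality $\kappa$ ordinals below $\kappa^+$ are not $\kappa^+$. I would present this cleanly by first isolating the claim "for each $<\kappa$-absolutely complemented $T\in H^V_\kappa$, $\Theta(\rho[T]\cap\R^*_G)^{D(V,\kappa)}<\kappa^+$", prove it by the $|\R^*_G|=\kappa$ counting in $V[G]$, and then take the supremum over the $\kappa$-many such trees.
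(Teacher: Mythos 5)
Your overall strategy (reduce to $olD(V,\kappa)=L(Hom^*_G,\R^*_G)$ via Remarks \ref{same suslin-co-suslin sets} and \ref{AD_R equivalences}, count the relevant objects in $V[G]$ using $|\R^*_G|^{V[G]}=\kappa$, and finish with regularity of $\kappa^+$ in $V[G]$) is the right one, but the pivotal quantitative step is false. You claim that for a fixed Suslin-co-Suslin $A$, the ordinal $\Theta(A)^{D(V,\kappa)}$ ``is a cardinal in $V[G]$ of size at most $|\R^*_G|=\kappa$, hence $\leq\kappa$.'' That is not so: $\Theta(A)$ is a supremum of surjective images of $\R^*_G$, not itself a surjective image, so the counting argument only gives $\Theta(A)\leq\kappa^+$, and in fact $\Theta(A)\geq\Theta_0^{D(V,\kappa)}>\omega_1^{D(V,\kappa)}=\kappa$ (the symmetric collapse makes $\kappa$ the $\omega_1$ of the derived model, and under $AD$ there is an $OD$ prewellordering of the reals of length $\omega_1$). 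So every $\Theta(A)$ strictly exceeds $\kappa$, and your ``$\sup$ of $\kappa$-many ordinals each $\leq\kappa$'' computation collapses. The earlier attempt to count $OD(A,x)$ definitions directly has the same unresolved issue you yourself flag: the ordinal parameters (even after Solovay-style reflection into $\Theta$) still range over $\kappa^+$-many values under the hypothesis $\Theta=\kappa^+$, so ``$\kappa$-many families each bounded below $\kappa^+$'' is never actually established.

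The repair is exactly where your last sentence was heading, and it is the paper's argument: you do not need any bound on a single $\Theta(A)$ beyond $\Theta(A)<\Theta$, which is free under $AD_\R$ since $\Theta$ is then a limit level of the Solovay hierarchy (equivalently, use the Wadge rank $w(A)<\Theta$, as the paper does). Assume toward a contradiction $\Theta^{D(V,\kappa)}=\kappa^+$. By $AD_\R$, $P(\R^*_G)^{D(V,\kappa)}=Hom^*_G$, and the name-counting in Claim \ref{size of hom} gives $|Hom^*_G|^{V[G]}=\kappa$. The map $A\mapsto w(A)$ (or $A\mapsto\Theta(A)$) then sends a set of $V[G]$-size $\kappa$ cofinally into $\kappa^+$, contradicting the regularity of $\kappa^+$ in $V[G]$. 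So your proposal needs this restructuring --- replace the false per-$A$ bound $\Theta(A)\leq\kappa$ by cofinality-plus-counting --- after which it coincides with the paper's proof.
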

\begin{proof}
    By Remarks \ref{same suslin-co-suslin sets} and \ref{AD_R equivalences}, $P(\R^*) = Hom^*$. The proof of Claim \ref{size of hom} shows that $|Hom^*|^{V[G]} = \kappa$. If $\Theta^{D(V,\kappa)} = \kappa^+$, then the map $A \mapsto w(A)$ is a surjection of $Hom^*$ onto $\kappa^+$ in $V[G]$, contradicting the claim.
\end{proof}

Probably the reader by now also expects the techniques above will go as far up the Solovay hierarchy as progress on studying hod mice allows and shall not be surprised by the following conjecture.

\begin{conjecture}
\label{theta conjecture}
    (PFA) If $\kappa$ is a limit of Woodin cardinals, then $\Theta^{D(V,\kappa)} < \kappa^+$.
\end{conjecture}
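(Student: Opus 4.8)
The plan is to run the same $\square_\kappa$-contradiction scheme behind Theorems \ref{result in old derived model}, \ref{theta_0 < kappa+}, \ref{theta < kappa+}, and \ref{lbr thm}, now applied at the very top of the Solovay hierarchy of $D(V,\kappa)$. Since PFA implies $\neg\square_\kappa$, it suffices to prove: if $\kappa$ is a limit of Woodin cardinals and $V\models\neg\square_\kappa$, then $\Theta^{D(V,\kappa)}<\kappa^+$. Assume not, so $\Theta^{D(V,\kappa)}=\kappa^+$, and argue by cases on where $\Theta^{D(V,\kappa)}$ lands in the Solovay hierarchy. If $\Theta^{D(V,\kappa)}=\Theta_\lambda$ for a limit ordinal $\lambda>0$, then by Remark \ref{AD_R equivalences} $D(V,\kappa)\models AD_\mathbb{R}$, so $\Theta^{D(V,\kappa)}<\kappa^+$ already by Theorem \ref{not adr} (and here one does not even need $\neg\square_\kappa$). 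If $\Theta^{D(V,\kappa)}=\Theta_0$, we are exactly in the hypothesis of Theorem \ref{theta_0 < kappa+} once we know $D(V,\kappa)\models MC$. If $\Theta^{D(V,\kappa)}=\Theta_{\alpha+1}$ is a successor level, we aim to invoke Theorem \ref{lbr thm} (or Theorem \ref{theta < kappa+}).

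So the task reduces, in the two non-$AD_\mathbb{R}$ cases, to verifying inside $D(V,\kappa)$ the capturing hypothesis each of those theorems needs: mouse capturing when $\Theta^{D(V,\kappa)}=\Theta_0$, and the existence of a least branch hod pair $(P,\Sigma)\in D(V,\kappa)$ with $P_{\Theta_{\alpha+1}}(\R^*)=Lp^\Sigma(\R^*)\cap P(\R^*)$ when $\Theta^{D(V,\kappa)}=\Theta_{\alpha+1}$ (note $D(V,\kappa)\models V=L(P(\R^*))$, so in the successor case this says $Lp^\Sigma(\R^*)$ reconstructs all of $D(V,\kappa)$). Both are instances of the single principle that every $AD^+$ model of $V=L(P(\R))$ --- and $D(V,\kappa)$ is such a model --- is captured by the hierarchy of least branch hod pairs, a form of hod pair capturing subsuming the mouse set conjecture. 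This principle is a theorem below a smallness bound, proved by core model induction, but is open with no smallness restriction. Granting it in $D(V,\kappa)$, together with the comparison theorem of Sargsyan and Steel for lbr hod pairs (Theorem \ref{lbr comparison}) --- precisely the ingredient that lets Claim \ref{can coiterate} go through at the relevant level --- the proofs of Theorems \ref{theta_0 < kappa+} and \ref{lbr thm} carry over verbatim: one extracts a $V$-definable hod pair $(Q,\Lambda)$ with $L(Lp^\Lambda(\R^*))=D(V,\kappa)$, runs the $S$-construction over $\pi\cup H_\kappa^V$ to obtain $Lp^\Lambda(\pi\cup H_\kappa^V)$ of height $(\kappa^+)^V$ inside $V$, extracts an almost coherent sequence on a club in $\kappa^+$ by the argument of \cite{coherent}, and rearranges it into a $\square_\kappa$-sequence as in Theorem \ref{result in old derived model} --- contradicting $\neg\square_\kappa$.

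The main obstacle is exactly hod pair capturing for $D(V,\kappa)$ without a smallness hypothesis: this is the central open problem of descriptive inner model theory in this region, and resolving it is far outside the scope of this paper. What the argument above shows is that the combinatorial half of the proof is already complete and uniform across the whole Solovay hierarchy --- building $\square_\kappa$ in $V$ from an almost coherent sequence on $\kappa^+$ that lives inside a hod premouse over $H_\kappa^V$ --- so any further progress on the inner model theory of $AD^+$ models transfers immediately to further cases of the conjecture, and to the full conjecture once the capturing principle is known in general. A secondary obstacle is that, even granting capturing, the Solovay hierarchy of $D(V,\kappa)$ could in principle extend past the range where the present comparison and Suslin-representation theory for least branch hod pairs (Theorems \ref{lbr comparison} and \ref{lbr strategy suslin-co-suslin}) is available; in that case one additionally needs a comparison theorem and a ``$\Sigma$ is Suslin-co-Suslin'' theorem for whatever broader hierarchy of hod pairs is required, along with the corresponding version of Claim \ref{can coiterate}.
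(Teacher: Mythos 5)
The statement you were given is stated in the paper only as a conjecture (Conjecture \ref{theta conjecture}); the paper offers no proof of it, and your proposal does not supply one either --- as you acknowledge, it is a reduction to an open problem rather than a proof. Your case split on where $\Theta^{D(V,\kappa)}$ falls in the Solovay hierarchy is exactly the right frame and mirrors how the paper's partial results are organized: the limit case is handled unconditionally via Remarks \ref{same suslin-co-suslin sets} and \ref{AD_R equivalences} together with Theorem \ref{not adr} (or Theorem \ref{result in old derived model}), the case $\Theta = \Theta_0$ is Theorem \ref{theta_0 < kappa+}, and the successor case is Theorem \ref{theta < kappa+} or \ref{lbr thm}. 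The genuine gap is precisely the capturing hypotheses of those theorems: mouse capturing in $D(V,\kappa)$ when $\Theta = \Theta_0$, and, in the successor case, the existence of a (least branch) hod pair $(P,\Sigma)\in D(V,\kappa)$ with $Lp^\Sigma(\R^*)$ generating all of $P(\R^*)^{D(V,\kappa)}$. These are instances of the mouse set conjecture and hod pair capturing, which are theorems only under smallness restrictions (via core model induction) and are open for an arbitrary derived model of a PFA model; nothing in your argument produces them, and removing them is exactly the content of the conjecture. So the proposal cannot be accepted as a proof of the statement.

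That said, your account of the structure is faithful to the paper's own perspective: the combinatorial half --- extracting a $V$-definable pair $(Q,\Lambda)$ by boolean-valued comparison, running the $S$-construction to see $Lp^\Lambda(\pi\cup H^V_\kappa)$ has height $(\kappa^+)^V$, reading off an almost coherent sequence as in the proof of Theorem \ref{result in old derived model}, and converting it to a $\square_\kappa$-sequence contradicting PFA --- is uniform across the Solovay hierarchy, and the paper itself says the method goes as far as progress on hod mice allows. One small correction to your ``secondary obstacle'': Theorems \ref{lbr comparison} and \ref{lbr strategy suslin-co-suslin} are stated under $AD^+$ with no smallness hypothesis, so their availability is not the issue; the issue is whether the least branch hierarchy (or any current hierarchy of hod pairs) actually captures $P(\R^*)^{D(V,\kappa)}$, which is the same open capturing problem you already identified as the primary obstacle.
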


\printbibliography

\end{document}